\documentclass{amsart}

\usepackage[utf8]{inputenc}
\usepackage{amsmath,amsthm,amssymb}
\usepackage{indentfirst}
\usepackage{url}
\usepackage{tikz-cd}
\usepackage[colorlinks=true]{hyperref}
\usepackage{colonequals}
\usepackage{graphicx}
\usepackage{enumerate}
\usepackage{soul}
\usepackage{stmaryrd}
\usepackage{mathrsfs,mathtools}
\usepackage{tgpagella}
\usepackage{bbm}
\usepackage[alphabetic]{amsrefs}

\usepackage[margin=1in]{geometry}

\relpenalty=9999
\binoppenalty=9999

\newtheorem{thm}[subsection]{Theorem}
\newtheorem{lem}[subsection]{Lemma}

\newtheorem{prop}[subsection]{Proposition}

\newtheorem{cor}[subsection]{Corollary}
{
\theoremstyle{definition}

}
\newenvironment{rem}
{\pushQED{\qed}\remx}
{\popQED\endremx}

\newenvironment{defn}
{\pushQED{\qed}\defnx}
{\popQED\enddefnx}
\newenvironment{example}
{\pushQED{\qed}\examplex}
{\popQED\endexamplex}

\newcommand{\mathblockyblocky}[1]{\mathbb{#1}}

\newcommand{\B}{\bigg}
\newcommand{\conv}{\textup{conv}}
\newcommand{\comp}{\textup{comp}}
\newcommand{\un}{\textup{un}}

\newcommand{\NN}{\mathblockyblocky N}
\newcommand{\ZZ}{\mathblockyblocky Z}

\newcommand{\RR}{\mathblockyblocky R}

\begin{document}
\title{Faces of root polytopes}
\author{Linus Setiabrata}
\address{Department of Mathematics, University of Chicago}
\email{linus@math.chicago.edu}
\begin{abstract}
For every directed acyclic graph $G$, we characterize the faces of the root polytope $\tilde Q_G = \conv\{\mathbf 0, \mathbf e_i - \mathbf e_j\colon (i,j) \in E(G)\}$ combinatorially. Our results specialize to state of the art results in a straightforward way.
\end{abstract}
\maketitle
\vspace{-0.5em}
\section{Introduction}
Let $A_n^+ = \{\mathbf e_i - \mathbf e_j\colon 1 \leq i < j \leq n+1\}\subset\RR^{n+1}$ denote the positive roots of type $A_n$. Subsets of $A_n^+$ can be encoded using a directed acyclic graph $G$ on $n+1$ vertices with edges $(i,j) \in E(G)$ oriented so that $i < j$. Given such a graph $G$, one can consider the \emph{root polytopes}
\[
Q_G \overset{\rm def}= \conv\{\mathbf e_i - \mathbf e_j\colon (i,j) \in E(G)\} \subset \RR^{n+1}
\]
and
\[
\tilde Q_G \overset{\rm def}= \conv\{\mathbf 0, \mathbf e_i - \mathbf e_j\colon (i,j) \in E(G)\} \subset \RR^{n+1}.
\]
The purpose of this paper is to completely characterize the faces of the root polytope $\tilde Q_G$ for every $G$. This is accomplished in Theorems~\ref{thm:tilde-faces} and~\ref{thm:non-tilde-faces}.\\

Root polytopes were first studied systematically in~\cite{postnikov2009}, where it was shown that the simplices in a triangulation of a root polytope count lattice points of a generalized permutahedron. The class of root polytopes also includes products of simplices, the triangulations of which are known to have very rich combinatorics (see e.g.\ \cite{hrs2000,santos2005,gnp2018}). Triangulations and subdivision algebras of root polytopes were studied in~\cite{meszaros2011,meszaros2016}, and have been used to solve a variety of other combinatorial problems, e.g.\ in~\cite{em2016,em2018}.

Much attention has been devoted to studying the face structure of the convex hull of the entire type $A_n$ root system, and more generally to that of other root systems $\Phi$. The faces of the polytope $\mathcal P_{A_n} = \conv\{\mathbf e_i - \mathbf e_j\colon i,j \in [n+1]\}$ were characterized combinatorially already in~\cite{cho1999}; computing the $f$-vector of $\mathcal P_{A_n}$ is an easy corollary of the characterization. The $f$-vectors of pulling triangulations of the boundary of $\mathcal P_{A_n}$ were computed in~\cite{hetyei2009}, and the $f$-vectors of unimodular triangulations of the boundary of $\mathcal P_\Phi = \conv\{\mathbf v\colon\mathbf v \in \Phi\}$, $\Phi = A_n, C_n, D_n$, were given in~\cite{abhps2011}. The orbit classes (under an action of the Weyl group) of the faces of $\mathcal P_\Phi$ were algebraically characterized in~\cite{cm2015}.

In contrast, to our knowledge the faces of convex hulls of (subsets of) \emph{positive} roots have been studied only for $\Phi^+ = A_n^+$. Gelfand, Graev, and Postnikov studied faces of $\tilde Q_{K_n}$ not containing the origin in~\cite[Prop.\ 8.1]{ggp1997}, but their result contains a mistake. Cho salvaged this result for facets of $\tilde Q_{K_n}$ in~\cite[Prop.\ 13]{cho1999}. Postnikov generalized Cho's result to facets of $\tilde Q_G$ for \emph{transitively closed} graphs $G$ (Definition~\ref{defn:transitively-closed}) in~\cite[Prop.\ 13.3]{postnikov2009}. To our knowledge, Postnikov's characterization~\cite[Prop.\ 13.3]{postnikov2009} has been the state of the art in this direction. Our results specialize to those of Postnikov straightforwardly (spelled out in Corollary~\ref{cor:transitively-closed-facets}), and correct the mistake in~\cite[Prop.\ 8.1]{ggp1997} in full generality (Corollary~\ref{cor:non-tilde-faces-K_n}; see also Remark~\ref{rem:ggp-comparison}).

When $G$ is an \emph{alternating} graph (Definition~\ref{defn:alternating}), the faces of the affine cone generated by $\{\mathbf e_i - \mathbf e_j\colon (i,j) \in E(G)\}$ has algebrogeometric significance: it is related to the deformation theory of a certain toric variety associated to $G$. The faces of this cone, i.e.\ the faces of $\tilde Q_G$ containing the origin, were combinatorially characterized in the recent paper~\cite[Thm.\ 3.17]{portakal2019}, building on the work in~\cite{vv2006}. We highlight and reprove their characterization in Corollaries~\ref{cor:portakal1} and~\ref{cor:portakal2}.

The faces of $\tilde Q_G$ are again root polytopes, i.e.\ equal to $\tilde Q_H\subseteq \tilde Q_G$ or $Q_H\subset \tilde Q_G$ for certain subgraphs $H\subseteq G$ (Proposition~\ref{prop:faces-of-root-are-root}). We characterize the subgraphs $H$ for which $\tilde Q_H\subseteq \tilde Q_G$ is a face in Theorem~\ref{thm:tilde-faces}, and separately characterize the subgraphs $H$ for which $Q_H\subset \tilde Q_G$ is a face in Theorem~\ref{thm:non-tilde-faces}. For $G = K_n$, the characterizations of Theorem~\ref{thm:tilde-faces} and~\ref{thm:non-tilde-faces} are particularly nice, and are highlighted in Corollary~\ref{cor:tilde-faces-K_n} and Corollary~\ref{cor:non-tilde-faces-K_n} respectively.

\section{Background}
\textbf{Conventions.} Unless stated otherwise, $G$ will denote a directed acyclic graph with $V(G) = [n]$. Without loss of generality, we may assume its edges $e = (i,j) \in E(G)$ are directed so that $i < j$. (The adjective \emph{acyclic} will only describe directed graphs, and means that there is no \emph{directed} cycle.) We use the notation $H\subseteq G$ to denote a subgraph $H$ of $G$ with $V(H) = V(G)$ and $E(H) \subseteq E(G)$. We also use the notation $G^\un$ to denote the underlying undirected graph of $G$. We reserve boldface mathematical notation to denote vectors; in particular $\mathbf e_i$ is the $i$-th basis vector of $\RR^n$. 

\textbf{Root polytopes.} In~\cite[Sec.\ 12]{postnikov2009}, Postnikov defined the \textbf{root polytopes} 
\[
Q_G \overset{\rm def}= \conv\{\mathbf e_i - \mathbf e_j\colon (i,j) \in E(G)\} \subset \RR^n
\]
and
\[
\tilde Q_G \overset{\rm def}= \conv\{\mathbf 0, \mathbf e_i - \mathbf e_j\colon (i,j) \in E(G)\} \subset \RR^n.
\]
It is well known that faces of root polytopes are again root polytopes:
\begin{prop}
\label{prop:faces-of-root-are-root}
For every subgraph $H\subseteq G$, the root polytope $Q_H$ is a subpolytope of $\tilde Q_H$, which in turn is a subpolytope of $\tilde Q_G$. Every subpolytope (in particular, every face) of $\tilde Q_G$ is the root polytope $Q_H$ or the root polytope $\tilde Q_H$ for some $H\subseteq G$.
\end{prop}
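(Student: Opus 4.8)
The plan is to prove the two assertions of Proposition~\ref{prop:faces-of-root-are-root} in turn. For the first assertion, the containments $Q_H \subseteq \tilde Q_H \subseteq \tilde Q_G$ are immediate from the definitions: $Q_H$ is the convex hull of a subset of the generators of $\tilde Q_H$ (we are only throwing away the vertex $\mathbf 0$), and $\tilde Q_H$ is the convex hull of a subset of the generators of $\tilde Q_G$ (since $E(H)\subseteq E(G)$). One should note that these are \emph{subpolytopes} in the sense of being the convex hull of a subset of vertices, not necessarily faces.

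For the second assertion, I would argue as follows. Let $P$ be any subpolytope of $\tilde Q_G$, say $P = \conv(S)$ where $S$ is a subset of the vertex set $\{\mathbf 0\} \cup \{\mathbf e_i - \mathbf e_j : (i,j)\in E(G)\}$. Each nonzero element of $S$ is of the form $\mathbf e_i - \mathbf e_j$ for a unique edge $(i,j)\in E(G)$ (uniqueness because the map $(i,j)\mapsto \mathbf e_i - \mathbf e_j$ is injective on ordered pairs). Let $H\subseteq G$ be the subgraph with $V(H) = V(G) = [n]$ and $E(H) = \{(i,j)\in E(G) : \mathbf e_i - \mathbf e_j \in S\}$. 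Then by construction, if $\mathbf 0 \in S$ we have $P = \tilde Q_H$, and if $\mathbf 0 \notin S$ we have $P = Q_H$. Since every face of $\tilde Q_G$ is in particular the convex hull of the subset of vertices of $\tilde Q_G$ that it contains, this applies to faces as a special case.

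The only genuinely delicate point — and it is minor — is the claim that every vertex of $\tilde Q_G$ is either $\mathbf 0$ or of the form $\mathbf e_i - \mathbf e_j$ with $(i,j)\in E(G)$, i.e.\ that none of the listed generators is a convex combination of the others; this guarantees that the subgraph $H$ recovered above is well-defined and that $S$ is genuinely a set of vertices. This follows because $\mathbf 0$ together with $\{\mathbf e_i - \mathbf e_j : (i,j)\in E(G)\}$ is an affinely independent-in-pieces configuration: each $\mathbf e_i - \mathbf e_j$ lies on the hyperplane $\sum_k x_k = 0$ and has squared Euclidean norm $2$, while $\mathbf 0$ has norm $0$, so $\mathbf 0$ is a vertex; and each $\mathbf e_i - \mathbf e_j$ is the unique point of the configuration with coordinate pattern $(\dots, +1, \dots, -1, \dots)$ supported on $\{i,j\}$, hence cannot be written as a convex combination of the other generators (which would force cancellation of coordinate signs). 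Actually it is cleanest to just observe that each generator $\mathbf v$ is exposed: for $\mathbf v = \mathbf e_i - \mathbf e_j$ the linear functional $x \mapsto x_i - x_j$ is maximized uniquely at $\mathbf v$ among all generators, and for $\mathbf v = \mathbf 0$ one can use $x\mapsto -\sum_{(i,j)\in E(G)}(x_i - x_j)$ or simply note that $\mathbf 0$ is the only generator with all coordinates zero. I expect the write-up of this verification to be the main (and only) obstacle, and it is routine.
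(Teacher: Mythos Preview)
Your proof is correct and follows essentially the same approach as the paper's (which simply cites \cite[Prop.~2.3]{ziegler2007} for the fact that a subpolytope is the convex hull of the vertices it contains, and then reads off the subgraph $H$). One small caution in your optional vertex verification: the functional $x \mapsto -\sum_{(i,j)\in E(G)}(x_i - x_j)$ does not in general uniquely expose $\mathbf 0$ (already for the directed path on four vertices it vanishes on $\mathbf e_2 - \mathbf e_3$); the functional $x \mapsto \sum_i i\,x_i$, which uses the acyclicity of $G$, does work---though this entire verification is in fact unnecessary for the proposition as stated.
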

\begin{proof}
The inclusion of edge sets $E(H) \subseteq E(G)$ implies the inclusion of polytopes $Q_H\subset\tilde Q_H\subseteq\tilde Q_G$.

Conversely, every subpolytope $P$ of $\tilde Q_G$ is the convex hull of the vertices of $\tilde Q_G$ which live in $P$ (see e.g.~\cite[Prop.\ 2.3]{ziegler2007}). The non-origin vertices correspond to edges of $G$, so the collection of such vertices forms a subgraph $H$ of $G$. If $P$ contains (resp.\ doesn't contain) the origin, then $P = \tilde Q_H$ (resp.\ $P = Q_H$).
\end{proof}

\begin{defn}
\label{defn:alternating}
A graph $G$ is \textbf{alternating} if there is no vertex $j \in [n] = V(G)$ so that $(i,j), (j,k) \in E(G)$.
\end{defn}

We remark that alternating graphs are nothing more than (appropriately oriented) bipartite graphs:

\begin{lem}
\label{lem:alternating-is-bipartite}
Let $G$ be an alternating graph and suppose $G^\un$ is connected. Then there is a partition of $V(G) = L\sqcup R$ into two parts so that every edge $(i,j) \in E(G)$ connects a vertex $i \in L$ to a vertex $j \in R$.
\end{lem}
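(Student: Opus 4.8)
The plan is to construct the bipartition directly by a parity argument on paths in the connected undirected graph $G^\un$. Fix any vertex $v_0 \in V(G)$, and for each vertex $w$ choose an undirected path from $v_0$ to $w$. Declare $w \in L$ if this path has even length and $w \in R$ if it has odd length; the first task is to check this is well-defined, i.e.\ independent of the chosen path. Since any two $v_0$-$w$ paths differ by a cycle in $G^\un$, it suffices to show every cycle in $G^\un$ has even length, which is exactly the statement that $G^\un$ is bipartite.

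So the heart of the matter is: an alternating graph has no odd undirected cycle. Here I would use the defining property that no vertex $j$ is simultaneously a head and a tail, i.e.\ there is no pair $(i,j),(j,k) \in E(G)$. Walk around an undirected cycle $w_0, w_1, \dots, w_{2m+1} = w_0$ and track, for each vertex $w_t$ in the cycle, whether the two incident cycle-edges point into $w_t$ or out of $w_t$ (in the original orientation). The alternating condition forbids the ``one in, one out'' configuration at any vertex, so at every vertex of the cycle the two incident edges either both point in (a ``sink'' of the cycle) or both point out (a ``source'' of the cycle). Traversing the cycle, the orientation of consecutive edges must therefore alternate between ``forward'' and ``backward'', which is only possible if the cycle has even length. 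Hence $G^\un$ is bipartite and the parity colouring above is well-defined.

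It remains to verify that this colouring $V(G) = L \sqcup R$ actually witnesses the alternating structure in the directed sense demanded by the statement: every edge $(i,j) \in E(G)$ should have $i \in L$ and $j \in R$ (not merely one endpoint in each part). First, $i$ and $j$ lie in different parts because they are adjacent in $G^\un$ and the colouring is a proper $2$-colouring. To pin down which endpoint is which, note that the colour of a vertex $w$ records the parity of \emph{every} $v_0$-$w$ walk, and along any walk the alternating condition forces the edges to alternate between being traversed head-to-tail and tail-to-head; combining this with the parity bookkeeping shows that all tails of edges receive one colour and all heads receive the other. After possibly swapping the names $L \leftrightarrow R$ (equivalently, recolouring with the opposite parity, which is harmless since $G^\un$ is connected), we get $i \in L$ and $j \in R$ for every $(i,j) \in E(G)$, as desired.

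The main obstacle is the bookkeeping in the second paragraph: one has to be careful that ``no vertex is both a head and a tail'' is genuinely a condition about the \emph{global} orientation and translate it cleanly into the alternation of edge-directions along an arbitrary closed walk. Once the no-odd-cycle claim is established, the rest is the standard construction of a bipartition from a connected bipartite graph, plus a short argument fixing the side-labels so that tails land in $L$ and heads in $R$.
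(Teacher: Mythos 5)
Your argument is correct, but it takes a genuinely longer route than the paper's, and it is worth noticing that the detour is unnecessary. Both proofs hinge on the same observation — the alternating condition forces every non-isolated vertex to be a ``pure source'' (all incident edges point out) or a ``pure sink'' (all incident edges point in) — but you use this observation twice, in two different places: first to show that edge directions alternate around any cycle in $G^\un$ (hence no odd cycles, hence $G^\un$ is bipartite), and then again in your third paragraph to argue that the resulting parity $2$-colouring lines up with the source/sink dichotomy. The paper instead takes the dichotomy itself as the definition of the two parts: $L$ is the set of pure sources and $R$ the set of pure sinks. Disjointness of $L$ and $R$ is immediate (a vertex with an edge cannot be both), $L \sqcup R = V(G)$ is exactly the alternating condition, and the fact that every edge goes from $L$ to $R$ is built into the definitions. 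This avoids any mention of cycles, parity, or a base vertex $v_0$, and it sidesteps the one place your write-up is a bit thin: the sentence ``combining this with the parity bookkeeping shows that all tails of edges receive one colour and all heads receive the other'' is asking the reader to redo a walk-alternation argument that deserves a line or two of its own (take a walk from one source $i_1$ to another source $i_2$: the first edge is traversed tail-to-head, the last edge head-to-tail, and the directions alternate in between, so the walk has even length). So your proof is sound but can be collapsed: the source/sink partition you discover at the end is available from the start, making the bipartiteness-via-odd-cycles step a reproof of something you get for free.
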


\begin{proof}
If $G$ has no edges, the lemma is vacuous. Otherwise, we may set
\begin{align*}
L&\overset{\rm def}=\{v \in V(G)\colon \textup{every edge of $G$ incident to $v$ has $v$ as its source}\},\\
R&\overset{\rm def}=\{v \in V(G)\colon \textup{every edge of $G$ incident to $v$ has $v$ as its sink}\}.
\end{align*}
Every vertex of the alternating graph $G$ has an edge incident to it, so $L$ and $R$ are disjoint. If a vertex $j \in [n]$ is not in $L$, then there is an edge $(i,j) \in E(G)$ with $j$ as its sink; similarly if $j$ is not in $R$, then there is an edge $(j,k) \in E(G)$ with $j$ as its source. Since $G$ is alternating, these cannot simultaneously happen, so $j\in L\sqcup R$. We conclude $L\sqcup R = [n]$.

From the definitions of $L$ and $R$, we see that every edge of $G$ connects a vertex in $L$ to a vertex in $R$.
\end{proof}

The following result can be derived from~\cite{postnikov2009}. Here we include a full proof for completeness.

\begin{prop}[{cf.\ \cite[Lem.\ 13.2, Lem.\ 12.5]{postnikov2009}}]
\label{prop:root-polytope-dimension-general}
Suppose $G^\un$ has $r$ connected components. Then $\tilde Q_G$ is $(n-r)$-dimensional. If $G^\un$ has $r$ connected components and $G$ is alternating, then $Q_G$ is $(n-r-1)$-dimensional.
\end{prop}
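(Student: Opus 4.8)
The plan is to reduce everything to one linear-algebra computation: the dimension of the linear span $V\overset{\rm def}=\operatorname{span}_\RR\{\mathbf e_i-\mathbf e_j\colon (i,j)\in E(G)\}$. Since $\mathbf 0$ is a vertex of $\tilde Q_G$, the affine hull of $\tilde Q_G$ is a linear subspace equal to $V$, so $\dim\tilde Q_G=\dim V$. The origin need not lie in $\operatorname{aff} Q_G$, so $\dim Q_G$ equals $\dim V$ or $\dim V-1$ according to whether $\mathbf 0\in\operatorname{aff} Q_G$; the alternating hypothesis will be used precisely to rule out $\mathbf 0\in\operatorname{aff} Q_G$.

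First I would establish the elementary claim that if the underlying graph of $G$ has $r$ connected components, then $\dim V=n-r$. Each $\mathbf e_i-\mathbf e_j$ lies in $W\overset{\rm def}=\{\mathbf x\in\RR^n\colon \sum_{v\in C}x_v=0\text{ for each component }C\text{ of }G^\un\}$, and $\dim W=\sum_C(|C|-1)=n-r$, so $V\subseteq W$. For the reverse inclusion, choose a spanning tree in each component: walking along tree paths expresses every $\mathbf e_a-\mathbf e_b$ with $a,b$ in a common component $C$ as an integer combination of edge vectors, and fixing $a\in C$ while letting $b$ range over $C$ yields vectors spanning $\{\mathbf x\colon \operatorname{supp}\mathbf x\subseteq C,\ \sum_{v\in C}x_v=0\}$; these spanning sets, over all $C$, together span $W$, so $W\subseteq V$. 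Hence $V=W$ and $\dim\tilde Q_G=\dim V=n-r$, proving the first assertion. (When $E(G)=\varnothing$ this reads $\tilde Q_G=\{\mathbf 0\}$ and $r=n$, which is consistent; isolated vertices change neither $n-r$ nor the polytopes, so there is no harm in ignoring them throughout.)

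For the second assertion I would invoke the standard dichotomy $\dim\operatorname{aff}(S)=\dim\operatorname{span}(S)$ if $\mathbf 0\in\operatorname{aff}(S)$, and $\dim\operatorname{span}(S)-1$ otherwise. The span computation above did not use the alternating hypothesis, so $\operatorname{span}\{\mathbf e_i-\mathbf e_j\colon (i,j)\in E(G)\}$ is still $(n-r)$-dimensional; assuming $E(G)\neq\varnothing$, it therefore suffices to show $\mathbf 0\notin\operatorname{aff} Q_G$. I would apply Lemma~\ref{lem:alternating-is-bipartite} to $G|_C$ for each component $C$ of $G^\un$ with at least one edge (the induced subgraph is still alternating and has connected underlying graph), obtaining a bipartition $C=L_C\sqcup R_C$ with every edge of $G|_C$ directed from $L_C$ to $R_C$. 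Put $L=\bigcup_C L_C$, the union over those components. Then for any generator $\mathbf e_i-\mathbf e_j$, with $i,j$ in some component $C$, one has $i\in L_C$ and $j\in R_C$, so $\sum_{v\in L}(\mathbf e_i-\mathbf e_j)_v=\sum_{v\in L_C}(\mathbf e_i-\mathbf e_j)_v=1$. Hence $Q_G$ lies in the affine hyperplane $\{\mathbf x\colon\sum_{v\in L}x_v=1\}$, which does not contain $\mathbf 0$, so $\mathbf 0\notin\operatorname{aff} Q_G$ and $\dim Q_G=(n-r)-1$. (If $E(G)=\varnothing$ then $Q_G=\conv\varnothing$ has dimension $-1=n-r-1$.)

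I do not anticipate a serious obstacle: once the (standard) span computation is in hand, the only real content is producing a single affine functional separating $Q_G$ from the origin in the alternating case, and Lemma~\ref{lem:alternating-is-bipartite} supplies one essentially for free. The points that need a little care are the bookkeeping over connected components — using the \emph{same} functional $\mathbf x\mapsto\sum_{v\in L}x_v$ across all components rather than arguing one component at a time — and disposing of the degenerate edgeless case; neither is difficult.
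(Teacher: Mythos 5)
Your proof is correct and follows essentially the same approach as the paper: both compute the dimension via the subspace $W=\{\mathbf x\colon \sum_{v\in C}x_v=0\text{ for each component }C\}$ and, in the alternating case, cut down by the hyperplane $\{\sum_{v\in L}x_v=1\}$ obtained from the bipartition of Lemma~\ref{lem:alternating-is-bipartite}. The only cosmetic difference is that the paper gets the lower bound on dimension by exhibiting the simplex on a spanning forest's vertices, whereas you show the linear span equals $W$ directly; these are equivalent uses of the same spanning-tree idea.
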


\begin{proof}
Take a spanning forest $T^\un\subseteq G^\un$ and let $T\subseteq G$ be its overlying directed graph. The $n-r+1$ vertices of $\tilde Q_T\subseteq \tilde Q_G$ are affinely independent and hence form an $(n-r)$-dimensional simplex. On the other hand, $\tilde Q_G$ is contained in the $(n-r)$-dimensional subspace
\[
W = \B\{\mathbf x \in \RR^n \colon \sum_{i \in G_j^\un} x_i = 0 \textup{ for all connected components $G_j^\un$ of $G^\un$}\B\}\subset\RR^n.
\]
It follows that $\tilde Q_G$ is $(n-r)$-dimensional.

Suppose now that $G^\un$ has $r$ connected components and $G$ is alternating. In this case, there is a subset $L\subseteq [n] = V(G)$ so that every edge $e \in E(G)$ has source in $L$ and target not in $L$ (the set $L$ can be thought of as ``source vertices'' of the graph $G$).

As before, take a spanning forest $T^\un\subseteq G^\un$ and let $T\subseteq G$ be its overlying directed graph. The $n-r$ vertices of $Q_T\subseteq Q_G$ are affinely independent and hence form an $(n-r-1)$-dimensional simplex. On the other hand, $\tilde Q_G$ is contained in the $(n-r)$-dimensional subspace $W$ and also in the subspace
\[
\B\{\mathbf x \in \RR^n\colon \sum_{i \in L} x_i = 1\B\}\subset\RR^n
\]
intersecting $W$ transversely. Thus $Q_G$ is contained in a $(n-r-1)$-dimensional subspace of $\RR^n$, and $Q_G$ is $(n-r-1)$-dimensional.
\end{proof}

\textbf{Polytopes.} We refer to~\cite{ziegler2007} for background on polytopes in general. In what follows, let
\[
\ell\colon(x_1, \dots, x_n)\mapsto \sum_{i=1}^nc_ix_i
\]
denote a linear form. Recall that a \textbf{face} $F$ of a polytope $P\subset\RR^n$ is a subset of the form
\[
F = P\cap \{\mathbf x\colon \ell(\mathbf x) = c\}
\]
for some $c \in \RR$ such that (affine) hyperplane $\{\ell(\mathbf x) = c\}$ is a \textbf{supporting hyperplane (for $F$)}, i.e.\ such that
\[
P\subset\{\mathbf x\colon\ell(\mathbf x)\geq c\}
\]
holds. A \textbf{facet} of a polytope is a face of codimension 1.

We will later use the following lemma.

\begin{lem}
\label{lem:dual-trick}
Let $F$ be a face of a polytope $P$ of codimension $d$. Then $F$ is the intersection of some $d$ facets of $P$.
\end{lem}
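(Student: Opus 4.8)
The plan is to proceed by downward induction on the codimension $d$, using the standard fact that the face lattice of a polytope is graded and that every proper face is contained in a facet.

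First I would dispose of the base case $d = 1$: a face of codimension $1$ is by definition a facet, so it is the intersection of the single facet equal to itself. (If one prefers to start the induction at $d = 0$, then $F = P$ is the intersection of the empty collection of facets, which is harmless but requires the convention that an empty intersection equals the ambient polytope.)

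For the inductive step, suppose $F$ has codimension $d \geq 2$, and suppose the claim holds for all faces of codimension less than $d$. The key structural input is that $F$, being a proper face of $P$, is contained in some facet $F_1$ of $P$ (this follows from gradedness of the face lattice: take any maximal chain from $F$ up to $P$; its second-to-top element is a facet containing $F$). Now $F$ is a face of the polytope $F_1$, and since $\dim F_1 = \dim P - 1$, the codimension of $F$ inside $F_1$ is $d - 1$. By the inductive hypothesis applied to the polytope $F_1$, we may write $F = F_1 \cap G_2 \cap \cdots \cap G_d$ where each $G_i$ is a facet of $F_1$. The remaining point is to upgrade each facet $G_i$ of $F_1$ to a facet $F_i$ of $P$ with $G_i = F_i \cap F_1$: every facet $G_i$ of $F_1$ is a codimension-$2$ face of $P$, hence is contained in a facet $F_i$ of $P$ distinct from $F_1$ (again by gradedness, since the interval from $G_i$ to $P$ in the face lattice has length $2$ and therefore contains at least two facets, one of which is $F_1$; pick $F_i \neq F_1$), and then $F_i \cap F_1$ is a face of $F_1$ of codimension $1$ containing $G_i$, so $F_i \cap F_1 = G_i$ by dimension count. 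Substituting, $F = F_1 \cap F_2 \cap \cdots \cap F_d$ is an intersection of $d$ facets of $P$.

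The main obstacle is the bookkeeping in the inductive step: one must be careful that the facets $G_i$ of the facet $F_1$ really do lift to facets of $P$, and that exactly $d$ facets (not fewer, not more) are used. The cleanest way to handle this is to invoke gradedness of the face lattice directly — that every face of codimension $k$ lies in a chain of faces of codimensions $k, k-1, \dots, 1, 0$ — rather than re-deriving facet-containment from scratch each time. With that fact in hand the argument is essentially a one-line induction, so I would state it as such and point to~\cite{ziegler2007} for gradedness.
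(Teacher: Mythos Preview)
Your argument is correct, but it takes a different inductive route than the paper's proof. The paper stays inside $P$ throughout: it picks a face $G$ of $P$ of codimension $d-1$ containing $F$ (so $G$ covers $F$ in the face lattice), writes $G$ as an intersection of $d-1$ facets of $P$ by the inductive hypothesis, and then appeals to the black-box fact that every face equals the intersection of the facets containing it to find one additional facet $G_*\supseteq F$ with $G\not\subseteq G_*$, forcing $G\cap G_* = F$. You instead descend to the smaller polytope $F_1$, apply the inductive hypothesis there, and then lift facets of $F_1$ back to facets of $P$ via the diamond property. The paper's approach is shorter because it never leaves $P$ and needs no lifting step; your approach is more self-contained in that it uses only gradedness and the diamond property rather than the stronger ``every face is the intersection of the facets containing it'' theorem. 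Both are standard, and either would serve the paper's purposes equally well.
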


\begin{proof}
First recall that every face $F$ of a polytope is the intersection of the facets containing it (see~\cite[Thm 3.1.7]{grunbaum2003} or~\cite[Thm 2.7]{ziegler2007}).

Let $G$ be a face of $P$ of codimension $d-1$ with $G\supseteq F$. By induction, we may find $d-1$ facets $G_1, \dots, G_{d-1}$ whose intersection is $G$. It suffices to find a facet $G_*\supseteq F$ not containing $G$, as $F = G \cap G_*$ for any such facet $G_*$. Such a facet $G_*$ must exist; otherwise, the intersection of all facets containing $F$ would contain $G$.
\end{proof}
\section{Faces of $\tilde Q_G$}
\label{sec:faces-of-tilde-QG}
This section contains the main results of the paper: Theorem~\ref{thm:tilde-faces} characterizes faces $\tilde Q_H\subseteq\tilde Q_G$, while Theorem~\ref{thm:non-tilde-faces} characterizes faces $Q_H\subset\tilde Q_G$. The latter theorem requires significantly more work than the former, but technicalities are summarized by Lemma~\ref{lem:ebl}. Both Theorems~\ref{thm:tilde-faces} and~\ref{thm:non-tilde-faces} are proven by analyzing supporting hyperplanes of the relevant subpolytopes (see Lemmas~\ref{lem:tilde-hyperplane-conditions} and~\ref{lem:non-tilde-hyperplane-conditions}), then finding necessary and sufficient combinatorial conditions on $H\subseteq G$ for which a supporting hyperplane exists.

We begin with the following useful lemma:
\begin{lem}
\label{lem:tilde-hyperplane-conditions}
Let $H\subseteq G$ be a subgraph, so $\tilde Q_H\subseteq \tilde Q_G$. The hyperplane
\[
S = \B\{\mathbf x\colon \sum_{i=1}^n c_ix_i = c\B\}
\]
is a supporting hyperplane for $\tilde Q_H$ if and only if:
\begin{enumerate}[(a)]
\item $c = 0$

\item $c_i \geq c_j$ for all $(i,j) \in E(G)$

\item If $(i,j) \in E(G)$, then $c_i = c_j$ if and only if $(i,j) \in E(H)$.
\end{enumerate}
\end{lem}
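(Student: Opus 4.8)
The plan is to unwind the definition of supporting hyperplane and reduce everything to evaluating the linear form $\ell(\mathbf x)=\sum_i c_ix_i$ on the vertices of $\tilde Q_G$ and $\tilde Q_H$. Explicitly, ``$S$ is a supporting hyperplane for $\tilde Q_H$'' means both that $\tilde Q_G\subseteq\{\mathbf x\colon\ell(\mathbf x)\geq c\}$ and that $\tilde Q_G\cap S=\tilde Q_H$. The one structural input I will use is the standard fact that the vertices of $\tilde Q_G$ are exactly $\mathbf 0$ together with the vectors $\mathbf e_i-\mathbf e_j$ for $(i,j)\in E(G)$, and likewise for $\tilde Q_H$; this follows by noting that $|\mathbf x|^2$ takes only the values $0$ and $2$ on the generating set and is strictly convex (so the only points of $\tilde Q_G$ with $|\mathbf x|^2=2$ are the $\mathbf e_i-\mathbf e_j$), and that the linear form $\mathbf x\mapsto\sum_k kx_k$ attains its maximum over the generating set uniquely at $\mathbf 0$ (so $\mathbf 0$ is a vertex). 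Combined with the fact that a face of a polytope is the convex hull of the vertices of the polytope lying on it (used already in the proof of Proposition~\ref{prop:faces-of-root-are-root}), the lemma becomes routine bookkeeping.

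For the forward implication, suppose $S$ is a supporting hyperplane for $\tilde Q_H$. Since $\mathbf 0\in\tilde Q_H=\tilde Q_G\cap S\subseteq S$, we get $c=\ell(\mathbf 0)=0$, which is (a). For each $(i,j)\in E(G)$ the vertex $\mathbf e_i-\mathbf e_j$ of $\tilde Q_G$ lies in $\{\mathbf x\colon\ell(\mathbf x)\geq 0\}$, so $c_i-c_j=\ell(\mathbf e_i-\mathbf e_j)\geq 0$, which is (b). For (c), fix $(i,j)\in E(G)$: if $(i,j)\in E(H)$ then $\mathbf e_i-\mathbf e_j\in\tilde Q_H\subseteq S$, forcing $c_i=c_j$; conversely, if $c_i=c_j$ then $\mathbf e_i-\mathbf e_j$ lies on $S$ and in $\tilde Q_G$, hence in $\tilde Q_G\cap S=\tilde Q_H$, and being a vertex of $\tilde Q_G$ it is a vertex of $\tilde Q_H$, necessarily of the form $\mathbf e_k-\mathbf e_l$ with $(k,l)\in E(H)$ since it is not $\mathbf 0$ — whence $(i,j)=(k,l)\in E(H)$ because both pairs are increasing.

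For the converse, suppose (a), (b), (c) hold. By (a) and (b), every vertex of $\tilde Q_G$ lies in $\{\mathbf x\colon\ell(\mathbf x)\geq 0\}$: indeed $\ell(\mathbf 0)=0=c$ and $\ell(\mathbf e_i-\mathbf e_j)=c_i-c_j\geq 0$. Hence $\tilde Q_G\subseteq\{\mathbf x\colon\ell(\mathbf x)\geq c\}$, so $\tilde Q_G\cap S$ is a face of $\tilde Q_G$, nonempty because $\mathbf 0\in\tilde Q_G\cap S$. This face is the convex hull of the vertices of $\tilde Q_G$ on $S$, namely $\mathbf 0$ (on $S$ by (a)) together with the $\mathbf e_i-\mathbf e_j$ satisfying $c_i=c_j$, which by (c) are precisely those with $(i,j)\in E(H)$. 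Therefore $\tilde Q_G\cap S=\conv(\{\mathbf 0\}\cup\{\mathbf e_i-\mathbf e_j\colon(i,j)\in E(H)\})=\tilde Q_H$, as required.

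I do not expect a serious obstacle. The only point needing care is the identification of the vertices of $\tilde Q_G$ and $\tilde Q_H$: it is precisely the fact that each $\mathbf e_i-\mathbf e_j$ with $(i,j)\in E(G)$ really is a vertex — not a convex combination of the other generators — that makes both implications in (c) sharp. Everything else is substitution of vertices into a linear form.
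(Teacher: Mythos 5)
Your proof is correct and takes essentially the same approach as the paper: both reduce the statement to evaluating the linear form on the generating set $\{\mathbf 0\}\cup\{\mathbf e_i-\mathbf e_j\colon(i,j)\in E(G)\}$ and passing between these points and the convex hulls. You additionally spell out why each generator is genuinely a vertex of $\tilde Q_G$ (via strict convexity of $|\mathbf x|^2$ and an auxiliary linear form), a fact the paper uses implicitly when it says conditions (b) and (c) follow by applying~\eqref{eqn:supp-hyp} "to vertices of $\tilde Q_G$".
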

\begin{proof}
Suppose $S$ is a supporting hyperplane for $\tilde Q_H$, and set
\[
S_\geq \overset{\rm def}=\B\{\mathbf x\colon \sum_{i=1}^n c_ix_i \geq c\B\} 
\]
Since $\mathbf 0 \in \tilde Q_H$ must be in $S$, condition (a) follows. Conditions (b) and (c) respectively follow from the conditions
\begin{equation}
\label{eqn:supp-hyp}
\tilde Q_G\subset S_\geq \qquad \textup{ and } \qquad \tilde Q_H = \tilde Q_G\cap S
\end{equation}
applied to vertices of $\tilde Q_G$. Conversely, if all three conditions (a), (b), and (c) hold, then
\[
\{\mathbf 0, \mathbf e_i - \mathbf e_j\colon (i,j)\in E(G)\}\subset S_\geq \quad\textup{ and } \quad \{\mathbf 0, \mathbf e_i - \mathbf e_j\colon (i,j)\in E(H)\} = \{\mathbf 0, \mathbf e_i - \mathbf e_j\colon (i,j)\in E(G)\} \cap S.
\]
Taking convex hulls, we deduce that~\eqref{eqn:supp-hyp} holds. Thus, $S$ is a supporting hyperplane for $\tilde Q_H$.
\end{proof}

\begin{defn}
Let $H\subseteq G$ be a subgraph, and let $H_1^\un, \dots, H_m^\un$ be the connected components of the underlying undirected graph $H^\un$ of $H$. The directed multigraph $H_\comp$ is the graph with vertex set 
\[
V(H_\comp) = \{H_i^\un\colon i\in[m]\}
\]
and edge multiset 
\[
E(H_\comp) = \{\{(H_i^\un,H_j^\un)\colon \textup{for each edge } (v_i, v_j) \in E(G)\setminus E(H) \textup{ where } v_i \in V(H_i^\un), v_j \in V(H_j^\un)\}\}.\qedhere
\]
\end{defn}

\begin{example}
\label{ex:hcomp-loops-cycles}
The multigraph $H_\comp$ may have multiple edges, self-loops, or directed cycles. For example, let $H\subseteq G$ be as in Figure~\ref{fig:hcomp-loops-cycles} below.
\begin{figure}[ht]
\begin{center}
\includegraphics[scale=0.8]{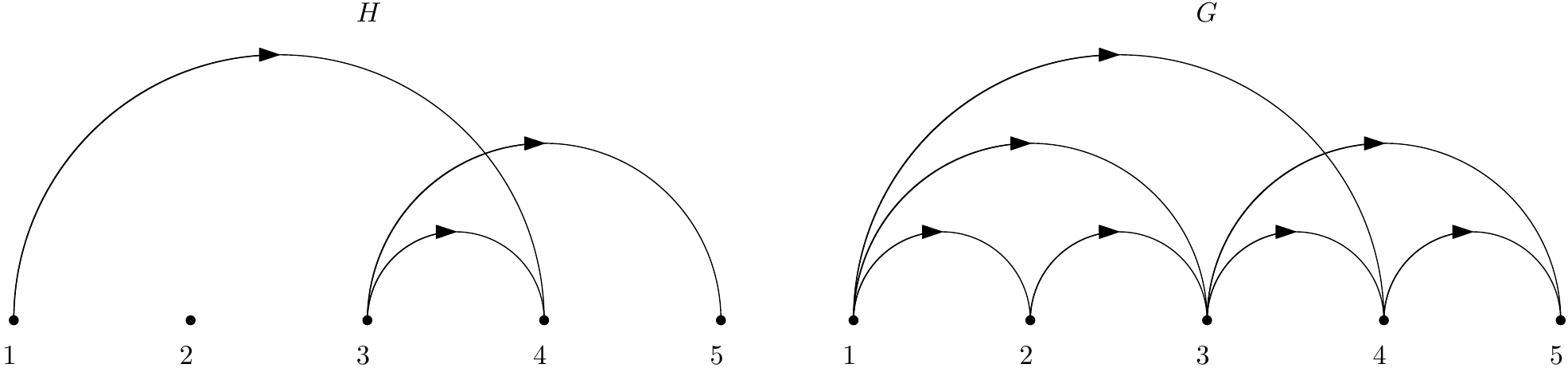}
\end{center}
\caption{The graphs $H$ and $G$ in Example~\ref{ex:hcomp-loops-cycles}.}
\label{fig:hcomp-loops-cycles}
\end{figure}

The graph $H^\un$ has two connected components (with vertex sets $V(H_1^\un) = \{1,3,4,5\}$ and $V(H_2^\un) = \{2\}$), and $H_\comp$ is as in Figure~\ref{fig:hcomp-loops-cycles-2}.
\begin{figure}[ht]
\begin{center}
\includegraphics{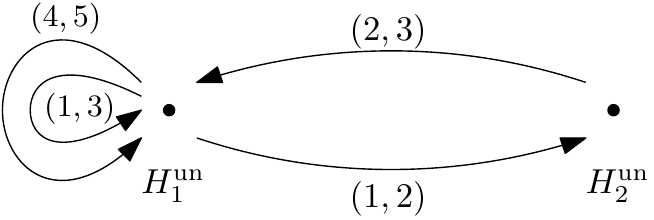}
\end{center}
\label{fig:hcomp-loops-cycles-2}
\caption{The graph $H_\comp$ for $H$ and $G$ in Example~\ref{ex:hcomp-loops-cycles}. The edges $E(H_\comp)$ are labelled by their corresponding edges in $G$.}
\end{figure}

\end{example}

\begin{thm}
\label{thm:tilde-faces}
Let $H\subseteq G$ be a subgraph. The subpolytope $\tilde Q_H\subseteq \tilde Q_G$ is a face of $\tilde Q_G$ if and only if $H_\comp$ is loopless and acyclic.
\end{thm}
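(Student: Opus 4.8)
The plan is to use Lemma~\ref{lem:tilde-hyperplane-conditions}, which reduces the problem to deciding whether there is a linear form $\mathbf c = (c_1, \dots, c_n)$ satisfying conditions (b) and (c): that is, $c_i \geq c_j$ for every $(i,j) \in E(G)$, with equality precisely on the edges of $H$. Such a vector $\mathbf c$ must be constant on each connected component $H_i^\un$ of $H^\un$ (by condition (c) applied along edges of $H$, noting connectedness). So I would introduce values $\gamma_i \in \RR$, one for each vertex $H_i^\un$ of $H_\comp$, and set $c_v = \gamma_i$ whenever $v \in V(H_i^\un)$. Translating conditions (b) and (c) through this substitution, an edge $(v, v') \in E(G) \setminus E(H)$ running from component $H_i^\un$ to component $H_j^\un$ becomes the strict inequality $\gamma_i > \gamma_j$, while edges inside a component are automatically equalities. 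Edges of $H$ impose nothing new. Hence $\tilde Q_H$ is a face of $\tilde Q_G$ if and only if the strict inequality system $\{\gamma_i > \gamma_j \colon (H_i^\un, H_j^\un) \in E(H_\comp)\}$ is feasible over $\RR$.

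Next I would observe that this feasibility is equivalent to $H_\comp$ being loopless and acyclic. A self-loop $(H_i^\un, H_i^\un)$ would force $\gamma_i > \gamma_i$, which is impossible, so looplessness is necessary; and a directed cycle $H_{i_1}^\un \to H_{i_2}^\un \to \cdots \to H_{i_k}^\un \to H_{i_1}^\un$ would force $\gamma_{i_1} > \gamma_{i_2} > \cdots > \gamma_{i_k} > \gamma_{i_1}$, again impossible, so acyclicity is necessary. Conversely, if $H_\comp$ is loopless and acyclic, then it admits a topological order; assigning $\gamma_i$ to be (say) the negative of the position of $H_i^\un$ in a topological sort — or more robustly, choosing $\gamma_i$ strictly decreasing along the order with enough room — gives a strict solution. (Multiple parallel edges between the same pair of components cause no trouble, since they impose the same strict inequality.) That settles both directions.

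For the converse direction I should double-check the somewhat subtle point that a valid assignment of $\gamma$-values genuinely produces a vector $\mathbf c$ meeting condition (c) \emph{exactly} — i.e., that no edge of $G$ outside $H$ accidentally becomes an equality. This is guaranteed because every such edge sits between two \emph{distinct} components of $H^\un$ (if both endpoints were in the same component they'd be connected within $H$, but edges between them need not be in $H$ — however, the relevant point is that the edge contributes an edge of $H_\comp$, and we have made \emph{that} inequality strict). Here I'd want to be slightly careful: an edge $(v,v') \in E(G) \setminus E(H)$ with $v, v'$ in the \emph{same} component $H_i^\un$ would give a self-loop in $H_\comp$, which the looplessness hypothesis rules out — so in the loopless-and-acyclic case, every edge of $E(G) \setminus E(H)$ really does go between distinct components and really is made strict. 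This is the one place where looplessness (not just acyclicity) is used in the sufficiency direction, and I expect verifying it cleanly to be the main (mild) obstacle; everything else is the standard equivalence between feasibility of a strict difference system and acyclicity of its constraint digraph.

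\begin{proof}[Proof sketch to be expanded]
By Lemma~\ref{lem:tilde-hyperplane-conditions}, $\tilde Q_H$ is a face of $\tilde Q_G$ if and only if there exist $c_1, \dots, c_n \in \RR$ with $c_i \geq c_j$ for all $(i,j) \in E(G)$, and with $c_i = c_j$ (for $(i,j) \in E(G)$) exactly when $(i,j) \in E(H)$. Along any edge of $H$ the values of $\mathbf c$ agree, so $\mathbf c$ is constant on each connected component $H_i^\un$ of $H^\un$; write $\gamma_i$ for the common value on $H_i^\un$. Under this identification, conditions (b) and (c) of Lemma~\ref{lem:tilde-hyperplane-conditions} hold for some $\mathbf c$ if and only if the strict inequalities $\gamma_i > \gamma_j$ are simultaneously satisfiable, where $(H_i^\un, H_j^\un)$ ranges over $E(H_\comp)$. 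Such a system is feasible over $\RR$ if and only if its constraint digraph $H_\comp$ has no self-loop and no directed cycle: the necessity is immediate (a loop or a directed cycle forces $\gamma_i > \gamma_i$), and for sufficiency one orders the vertices of the loopless acyclic multigraph $H_\comp$ topologically and picks the $\gamma_i$ strictly decreasing along that order. Since in the loopless case every edge of $E(G) \setminus E(H)$ joins two distinct components of $H^\un$, the resulting $\mathbf c$ satisfies condition (c) exactly. This proves the claim.
\end{proof}
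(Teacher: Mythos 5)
Your proposal is correct and follows essentially the same route as the paper: reduce via Lemma~\ref{lem:tilde-hyperplane-conditions} to the existence of coefficients $c_i$ that are constant on components of $H^\un$, translate the remaining constraints into the strict difference system on $V(H_\comp)$, observe that a self-loop or directed cycle makes the system infeasible, and in the converse direction read off a solution from a linear extension (topological order) of the loopless acyclic $H_\comp$. The point you flag as potentially subtle — that in the loopless case every edge of $E(G)\setminus E(H)$ joins distinct components of $H^\un$, so the resulting strict inequalities really do make condition (c) exact — is handled, and your argument matches the paper's.
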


\begin{proof}
Suppose $\tilde Q_H$ is a face of $\tilde Q_G$, and take a supporting hyperplane $S = \{\ell(\mathbf x) = c\}$ for $\tilde Q_H$. By condition (c) of Lemma~\ref{lem:tilde-hyperplane-conditions}, the numbers $\{c_i\}_{i \in [n]}$ are constant on connected components of $H$. In particular, if $i$ and $j$ are in the same connected component of $H$, and $(i,j) \in E(G)$, then $(i,j) \in E(H)$; in other words, $H_\comp$ is loopless. By condition (b) and (c) of Lemma~\ref{lem:tilde-hyperplane-conditions}, if $(H_i^\un, H_j^\un) \in E(H_\comp)$, then $c_{v_i} > c_{v_j}$, where $v_i \in V(H_i^\un)$ and $v_j \in V(H_j^\un)$. It follows that $H_\comp$ is acyclic.

Suppose now that $H_\comp$ is loopless and acyclic. We will define numbers $\{c_i\}_{i \in [n]}$ satisfying conditions (b) and (c) of Lemma~\ref{lem:tilde-hyperplane-conditions}, so that
\[
S = \B\{\mathbf x\colon\sum_{i=1}^nc_ix_i = 0\B\}
\]
is a supporting hyperplane for $\tilde Q_H \subseteq\tilde Q_G$. Since $H_\comp$ is loopless and acyclic, we may take a linear extension, i.e.\ a function
\[
f\colon V(H_\comp) \to \{1,\dots, |V(H_\comp)|\}
\]
so that if $(H_i^\un, H_j^\un) \in E(H_\comp)$, then $f(H_i^\un) > f(H_j^\un)$. Each vertex $v_i \in [n]$ is in some connected component $H_i^\un$, the assignment
\[
c_{v_i} = f(H_i^\un)
\]
works.
\end{proof}
We pause to highlight an alternative condition equivalent to looplessness of $H_\comp$.
\begin{prop}
\label{prop:loopless-criterion}
Let $H\subseteq G$ be a subgraph. Then $H_\comp$ is loopless if and only if $H$ is the disjoint union of induced subgraphs $\{G|_{P_i}\}_{P_i \in \mathcal P}$, where $\mathcal P = \{P_i\}$ is a partition of $[n]$.
\end{prop}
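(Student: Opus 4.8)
The plan is to unwind the definition of $H_\comp$ into a statement purely about edges, and then to verify both implications using the partition of $[n]$ coming from the connected components of $H^\un$. The first step is to observe that a self-loop at the vertex $H_i^\un$ of $H_\comp$ corresponds precisely to an edge $(u,v) \in E(G)\setminus E(H)$ with both $u$ and $v$ lying in $V(H_i^\un)$. Hence $H_\comp$ is loopless if and only if every edge of $G$ whose two endpoints lie in a common connected component of $H^\un$ is already an edge of $H$. I will use this reformulation throughout.

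For the forward direction, I would assume $H_\comp$ is loopless, let $H_1^\un,\dots,H_m^\un$ be the connected components of $H^\un$, and set $P_i = V(H_i^\un)$, so $\mathcal P = \{P_1,\dots,P_m\}$ partitions $[n]$. I claim $H = \bigsqcup_i G|_{P_i}$. Both sides have vertex set $[n]$, so it suffices to compare edges. Every edge of $H$ joins its endpoints in $H^\un$, so its endpoints lie in a common $P_i$; thus $E(H)\subseteq \bigcup_i E(G|_{P_i})$. Conversely, an edge of $G$ with both endpoints in some $P_i$ has its endpoints in a common component of $H^\un$, so it lies in $E(H)$ by the reformulation above. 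Hence the edge sets agree.

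For the converse, I would assume $H = \bigsqcup_{P_i\in\mathcal P} G|_{P_i}$ for some partition $\mathcal P$ of $[n]$. Since $H$ has no edge between distinct parts of $\mathcal P$, every connected component of $H^\un$ is contained in a single part $P_i$; that is, the partition into components of $H^\un$ refines $\mathcal P$. Now if $(u,v)\in E(G)$ has its endpoints in a common component of $H^\un$, then $u,v$ lie in a common part $P_i$, whence $(u,v)\in E(G|_{P_i})\subseteq E(H)$; by the reformulation, $H_\comp$ is loopless.

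I do not expect a genuine obstacle here — the argument is a direct translation of definitions. The one point that needs care is not to conflate the partition of $[n]$ into connected components of $H^\un$ with the (possibly strictly coarser) partition $\mathcal P$ witnessing the disjoint-union-of-induced-subgraphs property; the former refines the latter, and acknowledging this refinement is what makes the converse direction go through cleanly.
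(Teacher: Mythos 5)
Your proof is correct and follows essentially the same approach as the paper: both directions use the partition by connected components $\mathcal P = \{V(H_i^\un)\}$ in the forward direction and, in the converse, the observation that the component partition of $H^\un$ refines the given $\mathcal P$. Your explicit reformulation of looplessness as an edge condition and your explicit acknowledgment of the refinement are just cleaner phrasings of steps the paper's proof performs implicitly.
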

\begin{proof}
If $H_\comp$ is loopless, the partition $\mathcal P = \{V(H_i^\un)\}$ works: every edge of $H$ must be contained in some $G|_{V(H_i^\un)}$, so 
\begin{equation}
\label{eqn:loopless-criterion}
H\subseteq \bigsqcup_i G|_{V(H_i^\un)};
\end{equation}
on the other hand, an edge of $G|_{V(H_i^\un)}$ that is not in $H$ becomes a loop in $H_\comp$, so equality holds in~\eqref{eqn:loopless-criterion}. Conversely, suppose $H$ is the disjoint union of induced subgraphs $\{G|_{P_i}\}_{P_i \in \mathcal P}$: if an edge $(i,j) \in E(G)$ connects two vertices $i,j$ in the same connected component of $H^\un$, then $i$ and $j$ are in the same part $P_i \in \mathcal P$, hence must be in $E(H)$. In other words, $H_\comp$ is loopless.
\end{proof}

It remains to characterize faces $Q_H\subset\tilde Q_G$ (Theorem~\ref{thm:non-tilde-faces}). To illustrate the difference between faces $\tilde Q_H\subseteq \tilde Q_G$ and faces $Q_H\subset \tilde Q_G$, consider the following example:

\begin{example}
\label{ex:path-consistency-necessary}
When $H = G = K_3$, the polytope
\[
Q_{K_3} = \conv\{\mathbf e_1 - \mathbf e_2, \mathbf e_1 - \mathbf e_3, \mathbf e_2 - \mathbf e_3\}
\]
is not a face of
\[
\tilde Q_{K_3} = \conv\{\mathbf 0, \mathbf e_1 - \mathbf e_2, \mathbf e_1 - \mathbf e_3, \mathbf e_2 - \mathbf e_3\}.
\]
(It turns out that $Q_{K_3}$ is a triangle and $\tilde Q_{K_3}$ is a rhombus, as Figure~\ref{fig:QK3-example} below shows.) One explanation for this, which turns out to generalize, goes as follows: Suppose that a supporting hyperplane $\{\ell(\mathbf x) = c\}$ for $Q_{K_3}$ exists. Since $\mathbf 0 \not \in Q_{K_3}$, we must have $0 = \ell(\mathbf 0) > c$; up to scaling, we may assume $c = -1$. On one hand, $\ell(\mathbf e_1 - \mathbf e_2) = -1$ and $\ell(\mathbf e_2 - \mathbf e_3) = -1$. On the other hand, $\ell(\mathbf e_1 - \mathbf e_3) = -1$. This is a contradiction.
\begin{figure}[ht]
\begin{center}\includegraphics{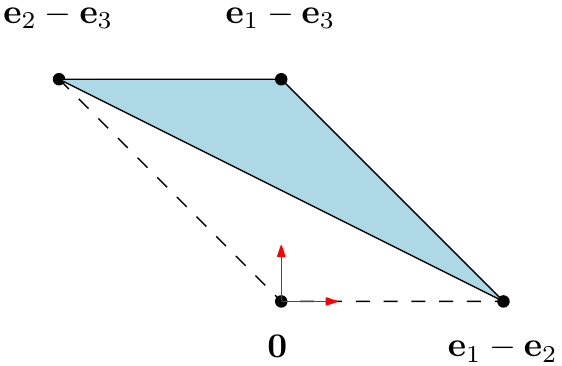}\end{center}
\caption{The root polytopes $Q_{K_3}$ and $\tilde Q_{K_3}$. (The hyperplane $\{x_1 + x_2 + x_3 = 0\}\subset\RR^3$ is identified with $\RR^2$ via the projection $(x_1, x_2, x_3)\mapsto (x_1 - x_2, x_1 - x_3)$; coordinate directions in $\RR^2$ are shown in red.)}
\label{fig:QK3-example}
\end{figure}
\end{example}

\begin{defn}
\label{defn:path-consistent}
A directed acyclic graph $H$ on vertex set $V(H) = [n]$ is \textbf{path consistent} if, for any pair $i,j \in [n]$ and any two undirected paths $\textsf p_{ij}^\un$ and $\textsf q_{ij}^\un$ in $H^\un$ connecting $i$ to $j$, we have
\begin{equation}
\label{eqn:path-consistent}
\#\{(a,b) \in \textsf p_{ij}\colon a < b\} - \#\{(a,b) \in \textsf p_{ij}\colon a > b\} = \#\{(a,b) \in \textsf q_{ij}\colon a < b\} - \#\{(a,b) \in \textsf q_{ij}\colon a > b\}.
\end{equation}
(Here, $\textsf p_{ij}$ and $\textsf q_{ij}$ are the subsets of $E(H)$ whose underlying undirected graph are the paths $\textsf p_{ij}^\un$ and $\textsf q_{ij}^\un$. The sets $\textsf p_{ij}$ and $\textsf q_{ij}$ are not necessarily directed paths.) In other words, the difference between the number of ``correctly'' oriented edges and the number of ``incorrectly'' oriented edges in any path depends only on $i$ and $j$.
\end{defn}

\begin{example}
The complete graph $K_3$ is not path consistent, since the paths $((1,3))$ and $((1,2), (2,3))$ connecting vertices 1 and 3 have one and two correctly oriented edges respectively (cf.\ Example~\ref{ex:path-consistency-necessary}).
\end{example}

\begin{example}
\label{ex:alternating-is-path-consistent}
Any alternating graph $G$ is path consistent. Explicitly, we may apply Lemma~\ref{lem:alternating-is-bipartite} to each connected component of $G^\un$ and obtain a partition $V(G) = [n]$ into two parts $[n] = L\sqcup R$ such that every vertex $i \in L$ is the source of every edge incident to it, and every vertex $j \in R$ is the sink of every edge incident to it. Then, if $\textsf p_{ij}^\un$ is a path connecting $i$ to $j$ in $G^\un$, we have
\begin{equation}
\label{eqn:transitively-closed-Q_H}
\#\{(a,b) \in \textsf p_{ij}\colon a < b\} - \#\{(a,b) \in \textsf p_{ij}\colon a > b\} = \begin{cases} 1 &\textup{ if } i \in L, j \in R\\ 0&\textup{ if } i,j \in L\\ 0&\textup{ if } i,j\in R\\ -1 &\textup{ if } i \in R, j \in L\end{cases}
\end{equation}
so Equation~\eqref{eqn:path-consistent} is satisfied.
\end{example}

While path consistency turns out to be a necessary condition, it is not sufficient, as the next example shows. (The necessity will be the easier half of Theorem~\ref{thm:non-tilde-faces}.)

\begin{example}
\label{ex:admissible-necessary}
Let $H\subset G$ be as in Figure~\ref{fig:2K2-C4}.
\begin{figure}[ht]
\begin{center}
\includegraphics[scale=0.7]{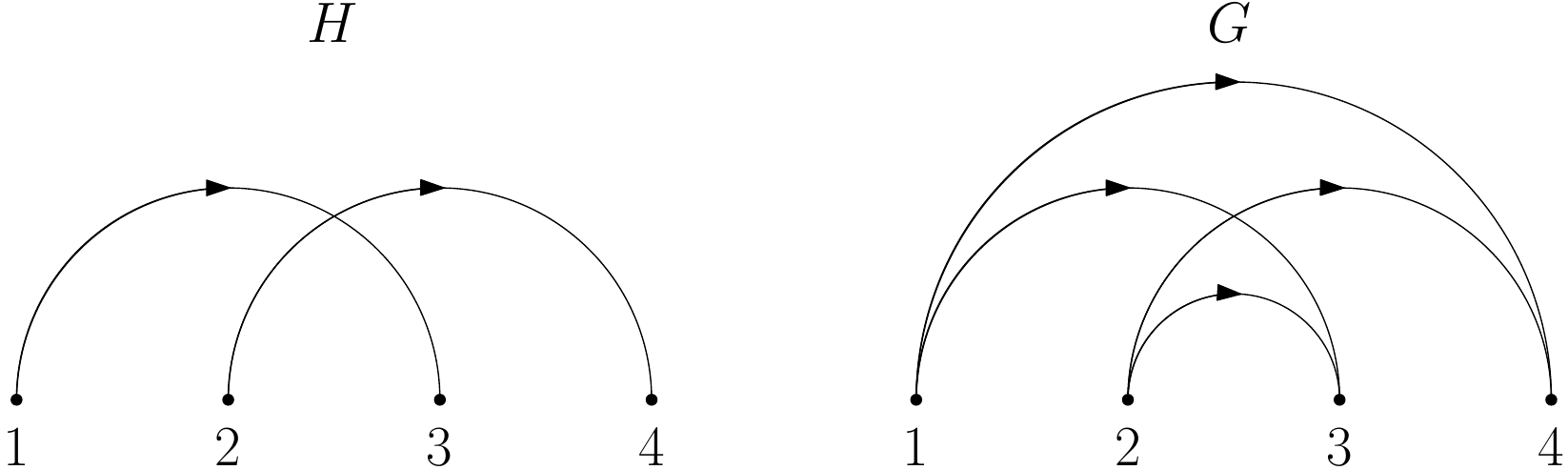}
\end{center}
\caption{The graphs $H$ and $G$ in Example~\ref{ex:admissible-necessary}.}
\label{fig:2K2-C4}
\end{figure}

The root polytope $Q_G$ is a square with affine hull
\[
\{(x_1, x_2, x_3, x_4)\colon x_1 + x_2 = 1, x_3 + x_4 = -1\}\subset \RR^4,
\]
so $\tilde Q_G$ is a square pyramid with apex $\mathbf 0$ (see Figure~\ref{fig:Q2C2-QK4-example}). The subpolytope $Q_H= \conv\{\mathbf e_1 - \mathbf e_3, \mathbf e_2 - \mathbf e_4\}$ is a diagonal of the square face $Q_G$ of $\tilde Q_G$; hence $Q_H$ is not a face of $\tilde Q_G$.

\begin{figure}[ht]
\begin{center}\includegraphics{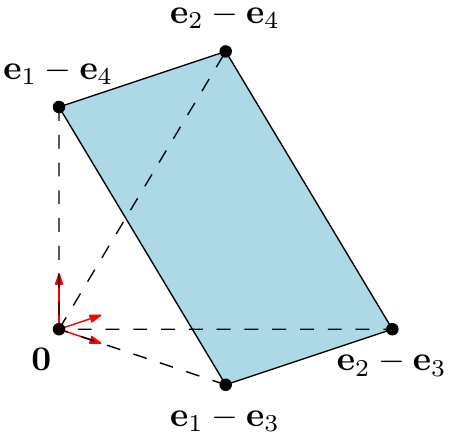}\end{center}
\caption{The root polytopes $Q_H$ and $\tilde Q_G$ in Example~\ref{ex:admissible-necessary}. (The hyperplane $\{x_1 + x_2 + x_3 + x_4 = 0\}\subset\RR^4$ is identified with $\RR^3$ via the projection $(x_1, x_2, x_3, x_4)\mapsto (x_1 - x_2, x_1 - x_3, x_1 - x_4)$; coordinate directions are shown in red.)}
\label{fig:Q2C2-QK4-example}
\end{figure}

Let us explain why $Q_H$ is not a face of $\tilde Q_G$ in a way that will generalize. Suppose that a supporting hyperplane $\{\ell(\mathbf x) = c\}$ for $Q_H$ exists. Since $\mathbf 0 \not \in Q_H$, we must have $0 = \ell(\mathbf 0) > c$; up to scaling, we may assume $c = -1$. Writing
\[
\ell(\mathbf x) = \sum_{i=1}^n c_ix_i,
\]
we have the four conditions
\begin{align*}
(1,3) \in E(H) &\implies c_1 = c_3 - 1,\\
(2,3) \in E(G)\setminus E(H) &\implies c_2 > c_3 - 1,\\
(2,4) \in E(H) &\implies c_2 = c_4 - 1,\\
(1,4) \in E(G)\setminus E(H) &\implies c_1 > c_4 - 1
\end{align*}
on the $c_i$: the first two say $c_2 > c_1$, whereas the last two say $c_1 > c_2$.
\end{example}

We want to introduce a key notion used to generalize Example~\ref{ex:admissible-necessary}. We begin with:
\begin{defn}
Let $H$ be a path consistent graph and assume $H^\un$ is path connected. For any two vertices $u,v \in V(H)$, pick any undirected path $\textsf p^\un$ connecting $u$ to $v$ and set
\[
\ell_{uv} \overset{\rm def}= \#\{(a,b)\in \textsf p\colon a < b\} - \#\{(a,b)\in \textsf p\colon a > b\}.
\]
(This quantity is well-defined because $H$ is path consistent.) We call $u_*\in V(H)$ a \textbf{weight source} if there is a vertex $v_* \in V(H)$ so that 
\[
\ell_{u_*v_*} = \max_{u,v}\ell_{uv}.
\]
Note that a weight source always exists, but is not necessarily unique.
\end{defn}

Although Definition~\ref{defn:weight-function} requires a choice of a weight source $u_*$, we will show in Proposition~\ref{prop:weight-function} that this choice does not matter.

\begin{defn}
\label{defn:weight-function}
Let $H$ be a path consistent graph and assume that $H^\un$ is path connected. Let $u_*$ be a weight source. The \textbf{weight function} (with respect to $u_*$) of $H$ is the function $w_{u_*}\colon V(H)\to \ZZ$ given by
\[
w_{u_*}(i) \overset{\rm def}= \ell_{u_*i}.
\]
\end{defn}
\begin{prop}
\label{prop:weight-function}
Let $H$ be a path consistent graph so that $H^\un$ is path connected. Let $w_{u_*}$ denote the weight function of $H^\un$ with respect to $u_*$. Then:
\begin{enumerate}
\item $w_{u_*}(i) + 1 = w_{u_*}(j)$ for every edge $(i,j) \in E(G)$.

\item $w_{u_*}(i) \geq 0$ for all $i \in V(H)$, and equality holds if and only if $i$ is a weight source.

\item If $u_*'$ is another weight source, then $w_{u_*} = w_{u_*'}$. Thus the weight function of $H$ is well-defined, independent of weight source.
\end{enumerate}
\end{prop}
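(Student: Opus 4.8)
The plan is to first record two elementary identities for the quantities $\ell_{uv}$ and then deduce all three claims from them. Reversing an undirected path connecting $u$ to $v$ gives $\ell_{vu} = -\ell_{uv}$, and concatenating a path from $u$ to $v$ with a path from $v$ to $w$ gives $\ell_{uw} = \ell_{uv} + \ell_{vw}$; both identities make sense precisely because path consistency guarantees that each $\ell_{uv}$ does not depend on the chosen path. In particular $\ell_{uu} = 0$.

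For claim (1) --- which I read with $E(H)$ in place of $E(G)$, since $w_{u_*}$ is defined using paths in $H^\un$ and the claim fails for a general edge of an ambient graph --- I would take an undirected path from $u_*$ to $i$ in $H^\un$, append the edge $(i,j) \in E(H)$, and use that this edge is correctly oriented (as $i < j$) to compute $\ell_{u_* j} = \ell_{u_* i} + 1$, i.e.\ $w_{u_*}(j) = w_{u_*}(i) + 1$.

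For claim (2), set $M = \max_{u,v} \ell_{uv}$ and fix a witness $v_*$ for the weight source $u_*$, so that $\ell_{u_* v_*} = M$. For any $i \in V(H)$, the additivity identity gives $w_{u_*}(i) = \ell_{u_* i} = M - \ell_{i v_*}$, which is $\geq 0$ since $\ell_{i v_*} \leq M$; this proves the inequality. If $w_{u_*}(i) = 0$, then $\ell_{i v_*} = M$, so $i$ is a weight source (witnessed by $v_*$). Conversely, if $i$ is a weight source, say $\ell_{i v'} = M$ for some $v'$, then $w_{u_*}(i) = \ell_{u_* i} = \ell_{u_* v'} - \ell_{i v'} \leq M - M = 0$, which together with the inequality already shown forces $w_{u_*}(i) = 0$.

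Claim (3) is then immediate: if $u_*'$ is another weight source, additivity gives $w_{u_*}(i) - w_{u_*'}(i) = \ell_{u_* i} - \ell_{u_*' i} = \ell_{u_* u_*'} = w_{u_*}(u_*')$, and by claim (2) this vanishes because $u_*'$ is a weight source; hence $w_{u_*} = w_{u_*'}$. The whole argument is bookkeeping with these two identities; the only point requiring care is the equality case of claim (2), where the two implications use different witnesses, and one must invoke path consistency before manipulating $\ell$ freely.
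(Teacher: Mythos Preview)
Your proof is correct and follows essentially the same approach as the paper's, via the concatenation identity $\ell_{uv} + \ell_{vw} = \ell_{uw}$ and the consequence $\ell_{u_*i} = \ell_{u_*v_*} - \ell_{iv_*}$. Your reading of claim~(1) with $E(H)$ in place of $E(G)$ correctly catches a typo in the statement, and your treatment of the equality case in~(2)---handling the two implications with possibly different witnesses $v_*$ and $v'$---is a bit more explicit than the paper's terser ``if and only if''.
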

\begin{defn}
Let $H$ be a path consistent graph (with $H^\un$ possibly disconnected). The \textbf{weight function} of $H$ is the function $w\colon V(H)\to \ZZ$ obtained by gluing together weight functions $w_j\colon V(H_j^\un)\to \ZZ$.
\end{defn}
\begin{proof}[Proof of Proposition~\ref{prop:weight-function}]
Item (1) is a consequence of the fact that concatenating $(i,j)\in E(G)$ to any path connecting $u_*$ to $i$ gives a path connecting $u_*$ to $j$.

More generally, concatenation of paths gives the equality 
\[
\ell_{uv} + \ell_{vw} = \ell_{uw}.
\]
Let $u_*$ be a weight source, and let $v_* \in V(H)$ satisfy $\ell_{u_*v_*} = \max_{u,v}\ell_{uv}$. The equality
\[
\ell_{u_*i} = \ell_{u_*v_*} - \ell_{iv_*}
\]
and the maximality of $\ell_{u_*v_*}$ guarantee that $\ell_{u_*i} \geq 0$. Furthermore equality holds if and only if $\ell_{iv_*} = \ell_{u_*v_*}$, which in turn holds if and only if $i$ is a weight source. This proves item (2).

Finally, let $u_*'$ be another weight source. By part (2), $w_{u_*}(u_*') = 0$ and hence
\[
\ell_{u_*i} = \underbrace{\ell_{u_*u_*'}}_{=0} + \ell_{u_*'i},
\]
so that $w_{u_*} = w_{u_*'}$. This proves item (3).
\end{proof}
\begin{defn}
Let $H\subseteq G$ be a subgraph, and assume $H$ is path consistent. Let $w$ be the weight function of $H$. Each edge $e = (H_i^\un, H_j^\un) \in E(H_\comp)$ of the multigraph $H_\comp$ corresponds to a unique edge $(v_i, v_j) \in E(G)\setminus E(H)$. We define the \textbf{weight decrease} of $e$ to be the quantity
\[
\textsf{wd}(e) \overset{\rm def}= w(v_i) - w(v_j).\qedhere
\]
\end{defn}
While path consistency will be analogous to looplessness of $H_\comp$, the following condition will be analogous to acyclicity of $H_\comp$.
\begin{defn}
\label{defn:admissible}
A subgraph $H\subseteq G$ is \textbf{admissible (with respect to $G$)} if, for every directed cycle $\mathcal C$ in $H_\comp$, the condition
\begin{equation}
\label{eqn:admissible}
\sum_{e \in \mathcal C}\textsf{wd}(e) > -|\mathcal C|
\end{equation}
holds, where the sum in~\eqref{eqn:admissible} runs over the edges $e$ forming the directed cycle $\mathcal C$.
\end{defn}

\begin{example}[cf.\ Example~\ref{ex:admissible-necessary}]
Returning to Example~\ref{ex:admissible-necessary}, we let $H\subset G$ be as in Figure~\ref{fig:2K2-C4}. The graph $H$ is path consistent; the graph $H^\un$ has two connected components $H_1^\un$ and $H_2^\un$ consisting of vertices $\{1,3\}$ and $\{2,4\}$ respectively.

The weight function $w\colon V(H)\to \NN$ sends $w(1) = w(2) = 1$ and $w(3) = w(4) = 2$. The graph $H_\comp$ consists of a single cycle of length 2: there is an edge $e = (H_1^\un,H_2^\un) \in E(H_\comp)$ corresponding to the edge $(1,4) \in E(G)\setminus E(H)$ and its weight decrease is $\textsf{wd}(e) = -1$; there is also an edge $e' = (H_2^\un, H_1^\un) \in E(H_\comp)$ corresponding to the edge $(2,3) \in E(G)\setminus E(H)$ and its weight decrease is $\textsf{wd}(e') = -1$.

The graph $H_\comp$ has a single directed cycle $\mathcal C = \{e,e'\}$, and the condition~\eqref{eqn:admissible}
\[
\textsf{wd}(e) + \textsf{wd}(e') > -2
\]
fails to hold. Thus $H\subseteq G$ is not admissible.
\end{example}

We now have enough language to state our characterization of subgraphs $H\subseteq G$ for which $Q_H\subset\tilde Q_G$ is a face:

\begin{thm}
\label{thm:non-tilde-faces}
Let $H\subseteq G$ be a subgraph of $G$. The subpolytope $Q_H\subset\tilde Q_G$ is a face of $\tilde Q_G$ if and only if $H$ is path consistent and admissible.
\end{thm}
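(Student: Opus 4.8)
The plan is to mimic the structure of the proof of Theorem~\ref{thm:tilde-faces}: first produce an analogue of Lemma~\ref{lem:tilde-hyperplane-conditions} characterizing supporting hyperplanes $S = \{\sum c_i x_i = c\}$ for $Q_H\subseteq\tilde Q_G$, then translate the existence of such an $S$ into the combinatorial conditions on $H$. Since $\mathbf 0\notin Q_H$ we may normalize $c = -1$; the condition $\tilde Q_G\subseteq S_\geq$ together with $Q_H = \tilde Q_G\cap S$, applied to vertices, should say exactly: (a) $c_i \geq c_j - 1$ for every $(i,j)\in E(G)$ (from $\mathbf e_i - \mathbf e_j \in S_\geq$), with $c_i = c_j - 1$ iff $(i,j)\in E(H)$, and (b) $c_i \geq 0$ for all $i$ (this is $\mathbf 0\in S_\geq$ — it forces $0 = \ell(\mathbf 0) \geq -1$, which is automatic, so actually the origin gives nothing beyond $c=-1$; the real content is that we need strict separation, i.e.\ no vertex of $Q_H$ lies off $S$... let me restate). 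More carefully: $S$ supports $Q_H$ iff every vertex of $Q_H$ lies on $S$ and every vertex of $\tilde Q_G$ lies in $S_\geq$; so the conditions are precisely $c_i - c_j = -1$ for $(i,j)\in E(H)$, $c_i - c_j \geq -1$ for $(i,j)\in E(G)\setminus E(H)$, and $0 \geq -1$ (vacuous) — but we also need $Q_H$ to be exactly the face, i.e.\ no \emph{other} vertex of $\tilde Q_G$ on $S$, which gives $c_i - c_j > -1$ for $(i,j)\in E(G)\setminus E(H)$ and $0 > -1$ (vacuous). I would record this as Lemma~\ref{lem:non-tilde-hyperplane-conditions}.

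Next I would reduce to the connected case and show necessity. Fixing a connected component $H_j^\un$ of $H^\un$ and restricting the $c_i$ to its vertices: the edges of $H$ inside this component force, along any undirected path from $u$ to $v$, the telescoping identity $c_u - c_v = -\ell_{uv}$ (each correctly-oriented edge contributes $-1$, each incorrectly-oriented edge contributes $+1$ — matching the weight-decrease sign convention), so the mere \emph{existence} of consistent $c_i$ forces path consistency of $H$; this is the analogue of ``$H_\comp$ loopless.'' Moreover, once path consistency holds, on each component the $c_i$ are determined up to an additive constant $\alpha_j$ by $c_i = \alpha_j - w(i)$ (choosing the weight function's normalization), and the inequality $c_i \geq 0$ is not required — wait, it is not among our conditions — so the only remaining constraints are the strict inequalities $c_{v_i} - c_{v_j} > -1$ coming from edges $(v_i,v_j)\in E(G)\setminus E(H)$ crossing between components. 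Writing $v_i$ in component $H_a^\un$ and $v_j$ in $H_b^\un$, this becomes $\alpha_a - \alpha_b > -1 - \textsf{wd}(e) = -(1+\textsf{wd}(e))$, i.e.\ $\alpha_b - \alpha_a < 1 + \textsf{wd}(e)$, one inequality per edge of $H_\comp$. So after the hyperplane lemma, the theorem reduces to: \emph{a real-valued function $\alpha$ on $V(H_\comp)$ with $\alpha_b - \alpha_a < 1 + \textsf{wd}(e)$ for every edge $e = (H_a^\un, H_b^\un)$ exists iff every directed cycle $\mathcal C$ of $H_\comp$ has $\sum_{e\in\mathcal C}(1 + \textsf{wd}(e)) > 0$}, which is exactly admissibility rewritten.

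The final step is this last equivalence, which is a standard feasibility-of-difference-constraints / Gallai–type statement (negative-cycle criterion for a system of difference inequalities, proved e.g.\ by running a shortest-path / Bellman–Ford argument on $H_\comp$ with edge weights $1 + \textsf{wd}(e)$, or by LP duality): summing the inequalities around any directed cycle gives necessity immediately, and for sufficiency one defines $\alpha_v$ as the minimum total weight of a directed walk in $H_\comp$ ending at $v$ (finite precisely because all directed cycles have positive total weight), then checks the triangle-type inequality. I expect this combinatorial lemma to be routine; the genuine obstacle is bookkeeping in the reduction — in particular being careful that the conditions in the hyperplane lemma really are equivalent to $S$ being a supporting hyperplane \emph{with face exactly $Q_H$} (strictness of the off-component inequalities), and confirming that no positivity constraint on the $c_i$ sneaks in, since unlike the $\tilde Q_H$ case the origin is not on $S$. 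It will also be worth remarking why $H_\comp$ having a self-loop is automatically ruled out here (a self-loop edge $e$ would be an edge of $G$ inside a single connected component of $H^\un$ not in $H$, contradicting path consistency — cf.\ Proposition~\ref{prop:loopless-criterion}), so that the admissibility condition only needs to range over genuine directed cycles.
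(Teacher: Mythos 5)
Your overall architecture matches the paper's: the same hyperplane lemma (your Lemma~\ref{lem:non-tilde-hyperplane-conditions} is exactly the paper's), the same telescoping argument that the $c_i$'s on a component of $H^\un$ are pinned down to be $w + \text{const}$ (modulo a self-correcting sign slip -- it should be $c_i = \alpha_j + w(i)$, not $\alpha_j - w(i)$), and the same reduction of sufficiency to producing a vector $\mathbf d$ on $V(H_\comp)$ satisfying difference constraints. Where you genuinely diverge is the proof of the analogue of Lemma~\ref{lem:ebl}: you propose to recognize the system $d_{t(e)} - d_{s(e)} < 1 + \textsf{wd}(e)$ as a strict difference-constraints feasibility problem and settle it by the standard negative-cycle/Bellman--Ford criterion (or, as you note, Gallai/LP duality). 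The paper instead develops the more technical Lemma~\ref{lem:ebl-extension} (extending an additive map on the cycle space $\mathcal M_G$ to all of $\ZZ^{E(G)}$ preserving the $>-1$ bound), proved by induction on $|E(G)|$ via contraction of tight cycles, together with incidence-matrix/cut-space language. Your route is more elementary and probably shorter; the paper's extension lemma is stronger than what is needed (the author notes it ``might be of independent interest''). One caveat for your route: the shortest-walk potential $\alpha_v$ only gives the \emph{non-strict} inequalities $\alpha_b - \alpha_a \leq 1 + \textsf{wd}(e)$; to get the required strict ones you should first replace $1 + \textsf{wd}(e)$ by $1 + \textsf{wd}(e) - \epsilon$ with $\epsilon > 0$ small enough that all simple directed cycle weights stay nonnegative, and run the argument with the perturbed weights.

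However, your final parenthetical contains a genuine misconception that you should drop before fleshing this out. You claim that a self-loop of $H_\comp$ would contradict path consistency, citing Proposition~\ref{prop:loopless-criterion}. That is false: path consistency is a condition on $H$ in isolation (it concerns only paths inside $H^\un$), whereas looplessness of $H_\comp$ is a condition involving the ambient graph $G$. Path-consistent subgraphs $H\subseteq G$ with self-loops in $H_\comp$ abound: take $H$ to be the directed path $1\to 2\to 3$ inside $G = K_3$; then $H$ is (trivially) path consistent but $(1,3)\in E(G)\setminus E(H)$ is a self-loop of $H_\comp$ with $\textsf{wd} = -2$, and indeed $Q_H$ is \emph{not} a face of $\tilde Q_{K_3}$. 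This is exactly why Definition~\ref{defn:admissible} must range over \emph{all} directed cycles of $H_\comp$ including the length-$1$ ones: a self-loop $e$ at a vertex $a$ contributes the $\alpha$-free constraint $0 < 1 + \textsf{wd}(e)$, which is precisely the admissibility inequality for the cycle $\{e\}$. Your difference-constraints framework handles this automatically, but the remark as written would lead you to silently drop these constraints, leaving a gap in both directions of the proof.
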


To prove Theorem~\ref{thm:non-tilde-faces}, we will use the following technical lemma; Section~\ref{sec:ebl} is dedicated to its proof, which we feel is unenlightening in the context of this paper.

\begin{lem}
\label{lem:ebl}
Let $H\subseteq G$ be an admissible subgraph of $G$. There is a vector $\mathbf d = (d_v)_{v \in V(H_\comp)} \in \RR^{V(H_\comp)}$ so that
\[
\textup{\textsf{wd}}(e) + d_{s(e)} - d_{t(e)} > -1
\]
for every edge $e \in E(H_\comp)$.
\end{lem}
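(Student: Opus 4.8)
The plan is to reformulate the desired inequalities as a feasibility question about potentials on the directed multigraph $H_\comp$, and then apply a standard cycle-condition criterion (a Gordan/Farkas-type alternative, or equivalently the existence of a feasible potential for a system of difference constraints). Concretely, introduce for each edge $e \in E(H_\comp)$ the ``slack budget'' $\beta(e) \coloneqq \textsf{wd}(e) + 1$, so that the inequality $\textsf{wd}(e) + d_{s(e)} - d_{t(e)} > -1$ becomes $d_{t(e)} - d_{s(e)} < \beta(e)$. Thus I want a real-valued function $d$ on $V(H_\comp)$ such that along every edge $e$ the increase $d_{t(e)} - d_{s(e)}$ is strictly less than the prescribed bound $\beta(e)$. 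Summing this constraint around any directed cycle $\mathcal C$ of $H_\comp$, the potential differences telescope to $0$, forcing $0 < \sum_{e \in \mathcal C}\beta(e) = \sum_{e\in\mathcal C}(\textsf{wd}(e)+1) = |\mathcal C| + \sum_{e\in\mathcal C}\textsf{wd}(e)$, i.e.\ exactly the admissibility inequality~\eqref{eqn:admissible}. So admissibility is clearly necessary; the content is that it is sufficient.

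The key steps, in order, would be: (1) Reduce to the case where $H_\comp$ is (weakly) connected by treating each connected component of $H_\comp$ separately and taking a disjoint union of the resulting vectors $\mathbf d$. (2) Observe that since the constraints are strict, it is enough to produce $d$ with $d_{t(e)} - d_{s(e)} \le \beta(e) - \varepsilon$ for a uniform small $\varepsilon > 0$; equivalently, after replacing each $\beta(e)$ by $\beta(e) - \varepsilon$, the weak system must be feasible, and admissibility gives $\sum_{e\in\mathcal C}(\beta(e)-\varepsilon) = |\mathcal C| + \sum_{e\in\mathcal C}\textsf{wd}(e) - \varepsilon|\mathcal C| > 0$ provided $\varepsilon$ is smaller than $\min_{\mathcal C}\bigl(1 + |\mathcal C|^{-1}\sum_{e\in\mathcal C}\textsf{wd}(e)\bigr)$, which is a positive number because there are only finitely many (simple) directed cycles $\mathcal C$ in the finite multigraph $H_\comp$ and each term is $>0$ by~\eqref{eqn:admissible}. (3) Apply the classical fact that a system of difference constraints $\{d_{t(e)} - d_{s(e)} \le \beta'(e)\}_{e}$ on a directed graph has a real solution if and only if every directed cycle has nonnegative total budget $\sum_{e\in\mathcal C}\beta'(e) \ge 0$; one standard way to see this is to fix a base vertex $v_0$ and define $d_v$ to be the minimum total budget $\sum_{e}\beta'(e)$ over all directed walks from $v_0$ to $v$ (with $d_v = +\infty$ if there is no such walk, which does not occur after passing to strongly connected pieces, or can be handled by adding dummy zero-budget edges from $v_0$), the finiteness and well-definedness of this minimum being exactly guaranteed by the nonnegative-cycle condition; then $d_{t(e)} - d_{s(e)} \le \beta'(e)$ holds by the triangle-inequality/relaxation property of shortest walks. (4) With $\beta'(e) = \beta(e) - \varepsilon$ feasible in the weak sense, the same $d$ satisfies the original strict inequalities, proving the lemma.

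I expect the main obstacle to be purely bookkeeping: handling the possibility that $H_\comp$ is not strongly connected (so that some vertices are unreachable from a fixed base vertex in the directed sense) while still producing finite potentials, and making sure the "add a zero-cost edge from $v_0$ to every vertex'' trick does not create a bad cycle — it does not, since such an edge can appear at most once in a simple cycle and contributes budget $0$, and composing it with a directed path back to $v_0$ of total budget $\sigma$ would force $\sigma \ge 0$, which is itself a consequence of the nonnegative-cycle condition on genuine cycles of $H_\comp$ together with reachability. An alternative that sidesteps this entirely is to invoke the LP-duality/Farkas formulation directly: the strict system $\{d_{t(e)} - d_{s(e)} < \beta(e)\}$ is infeasible iff there is a nonnegative combination of the inequalities that yields $0 < $ (nonpositive constant), and such a combination, by the structure of the constraint matrix (a network/incidence matrix), can be taken supported on a directed cycle, contradicting admissibility; I would likely present whichever of these two routes is shortest, and I lean toward the explicit shortest-walk construction since it is self-contained and gives an actual formula for $\mathbf d$.
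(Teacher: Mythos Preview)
Your approach is correct and genuinely different from the paper's. You recast the problem as feasibility of a system of difference constraints $d_{t(e)}-d_{s(e)}<\beta(e)$ with $\beta(e)=\textsf{wd}(e)+1$, tighten by a uniform $\varepsilon>0$ (legitimate since $H_\comp$ is finite and admissibility gives strict positivity on each simple cycle), and then invoke the classical Bellman--Ford/shortest-walk criterion for the non-strict system; the auxiliary-source trick creates no new cycles because the added vertex has in-degree zero, so your worry about ``paths back to $v_0$'' is moot. The paper instead proves a more general extension lemma (Lemma~\ref{lem:ebl-extension}): any additive map $c\colon\mathcal M_G\to\RR$ on the formal cycle group of a directed multigraph with $c(\mathcal C)>-|\mathcal C|$ for every directed cycle extends to an additive map $\ZZ^{E(G)}\to\RR$ with $c(e)>-1$ on each edge. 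That lemma is established by a rather intricate induction on $|E(G)|$, contracting the edge set $E_I$ supporting the cycles of minimum mean value $W=\min_i c(\mathcal C_i)/|\mathcal C_i|$; Lemma~\ref{lem:ebl} is then deduced via the identification of the image of the incidence matrix with the cut space. What your route buys is brevity, self-containedness, and an explicit formula for $\mathbf d$; what the paper's route buys is the standalone Lemma~\ref{lem:ebl-extension}, which is slightly stronger in that the input $c$ need not come from an edge function---though for the present application it does (namely $c(\mathcal C)=\sum_e m_e(\mathcal C)\,\textsf{wd}(e)$), so your argument fully suffices.
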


(Here, $s(e)$ denotes the source of the edge $e$, and $t(e)$ denotes the target of the edge $e$.)

We now prove the following analogue of Lemma~\ref{lem:tilde-hyperplane-conditions} for faces $Q_H\subset\tilde Q_G$:

\begin{lem}
\label{lem:non-tilde-hyperplane-conditions}
Let $H\subseteq G$ be a subgraph, so $Q_H\subset\tilde Q_G$. The hyperplane
\[
S = \B\{\mathbf x\colon\sum_{i=1}^nc_ix_i = c\B\}
\]
is a supporting hyperplane for $Q_H$ if and only if:
\begin{enumerate}[(a)]
\item $c < 0$

\item $c_i \geq c_j + c$ for all $(i,j) \in E(G)$

\item If $(i,j) \in E(G)$, then $c_i = c_j + c$ if and only if $(i,j) \in E(H)$.
\end{enumerate}
\end{lem}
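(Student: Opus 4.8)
The plan is to mimic the proof of Lemma~\ref{lem:tilde-hyperplane-conditions}, but keeping track of the constant term $c$, which is now allowed to be nonzero because the origin is no longer required to lie on the hyperplane.

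First I would prove the forward direction. Suppose $S = \{\ell(\mathbf x) = c\}$ is a supporting hyperplane for $Q_H$, with $Q_G \subset S_\geq \colonequals \{\ell(\mathbf x) \geq c\}$. Since $\mathbf 0 \in \tilde Q_G$ but $\mathbf 0 \notin Q_H$ — here I should note that $Q_H$ is nonempty (if $E(H) = \varnothing$ the statement is vacuous or handled trivially) — the point $\mathbf 0$ lies in $S_\geq$ but not in $S$, giving $0 = \ell(\mathbf 0) > c$, which is (a). For (b), evaluate $\ell$ at the vertex $\mathbf e_i - \mathbf e_j$ of $\tilde Q_G$ for each $(i,j) \in E(G)$: membership in $S_\geq$ gives $c_i - c_j \geq c$, i.e.\ $c_i \geq c_j + c$. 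For (c), I use that $Q_H = Q_G \cap S$: indeed, since $Q_H \subseteq Q_G \cap S$ and both are faces cut out by the same supporting hyperplane (a supporting hyperplane for $Q_H$ is in particular a supporting hyperplane for $Q_G$, and the induced face is $Q_G \cap S = \conv\{\mathbf e_i - \mathbf e_j \colon (i,j) \in E(G),\ c_i - c_j = c\}$), the edge $(i,j)\in E(G)$ satisfies $c_i = c_j + c$ precisely when the vertex $\mathbf e_i - \mathbf e_j$ lies in $Q_H$, i.e.\ when $(i,j) \in E(H)$. (One should be slightly careful that $\mathbf e_i - \mathbf e_j$ is actually a vertex of $\tilde Q_G$ and not a non-vertex point, but distinct edges $(i,j)$ give distinct vectors $\mathbf e_i - \mathbf e_j$, none equal to $\mathbf 0$, so all of them are vertices.)

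For the converse, suppose (a), (b), (c) hold. Condition (b) says every generator $\mathbf e_i - \mathbf e_j$, $(i,j) \in E(G)$, lies in $S_\geq$, and since $c < 0$ by (a), the origin $\mathbf 0$ also satisfies $\ell(\mathbf 0) = 0 > c$, so $\mathbf 0 \in S_\geq$. Taking convex hulls, $\tilde Q_G \subset S_\geq$, hence $S$ is a supporting hyperplane of $\tilde Q_G$ and $Q_G \subset S_\geq$. Condition (c) together with (b) identifies which generators of $Q_G$ lie on $S$: exactly those $\mathbf e_i - \mathbf e_j$ with $(i,j) \in E(H)$. Also $\mathbf 0 \notin S$ since $0 > c$. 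Therefore $Q_G \cap S = \conv\{\mathbf e_i - \mathbf e_j \colon (i,j) \in E(H)\} = Q_H$, so $S$ is a supporting hyperplane for $Q_H \subset \tilde Q_G$, as desired.

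I do not expect a serious obstacle here; the lemma is essentially bookkeeping parallel to Lemma~\ref{lem:tilde-hyperplane-conditions}. The one point requiring a little care is the passage ``$S$ supporting for $Q_H$ $\iff$ $Q_G \cap S = Q_H$ and $S$ supporting for $Q_G$'': I want to make sure that a hyperplane supporting the subpolytope $Q_H$ of $\tilde Q_G$ is automatically a supporting hyperplane of the whole of $\tilde Q_G$ (this is exactly the content of $\tilde Q_G \subset S_\geq$, which I get from evaluating $\ell$ at all vertices of $\tilde Q_G$), and conversely that cutting $Q_G$ by such an $S$ recovers precisely $Q_H$ and not something larger — which is where condition (c) does the work. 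Everything else reduces to evaluating the linear form $\ell$ at the finitely many vertices $\mathbf 0$ and $\mathbf e_i - \mathbf e_j$ of $\tilde Q_G$ and taking convex hulls.
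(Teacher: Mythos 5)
Your proof is correct and follows essentially the same approach as the paper: evaluate the linear form at the vertices $\mathbf 0$ and $\mathbf e_i - \mathbf e_j$ of $\tilde Q_G$, using $\mathbf 0 \in S_\geq \setminus S$ for (a) and the non-origin vertices for (b) and (c), and take convex hulls for the converse. The only cosmetic difference is that you phrase the face condition as $Q_H = Q_G \cap S$ rather than the paper's $Q_H = \tilde Q_G \cap S$, but since $c<0$ forces $\mathbf 0 \notin S$, these two intersections coincide, so nothing is lost.
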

\begin{proof}
Suppose $S$ is a supporting hyperplane for $Q_H$, and set
\[
S_\geq \overset{\rm def}=\B\{\mathbf x\colon \sum_{i=1}^n c_ix_i \geq c\B\} 
\]
Since $\mathbf 0 \not\in Q_H$ must be in $S_\geq \setminus S$, condition (a) follows. Conditions (b) and (c) respectively follow from the conditions
\begin{equation}
\label{eqn:supp-hyp-2}
\tilde Q_G\subset S_\geq \qquad \textup{ and } \qquad Q_H = \tilde Q_G\cap S
\end{equation}
applied to vertices of $\tilde Q_G$. Conversely, if all three conditions (a), (b), and (c) hold, then
\[
\{\mathbf 0, \mathbf e_i - \mathbf e_j\colon (i,j)\in E(G)\}\subset S_\geq \quad\textup{ and } \quad \{\mathbf e_i - \mathbf e_j\colon (i,j)\in E(H)\} = \{\mathbf 0, \mathbf e_i - \mathbf e_j\colon (i,j)\in E(G)\} \cap S.
\]
Taking convex hulls, we deduce that~\eqref{eqn:supp-hyp-2} holds. Thus, $S$ is a supporting hyperplane for $Q_H$.
\end{proof}
\begin{proof}[Proof of Theorem~\ref{thm:non-tilde-faces}]
Let $Q_H\subset \tilde Q_G$ be a face of $\tilde Q_G$, and take a supporting hyperplane
\[
S = \B\{\mathbf x\colon \sum_{i=1}^n c_ix_i = c\B\}
\]
of $Q_H$.

Applying condition (a) of Lemma~\ref{lem:non-tilde-hyperplane-conditions}, we may assume up to scaling $c = -1$. Then, if $\textsf p_{ij}^\un$ is an undirected path in $H^\un$ connecting $i$ to $j$, and $\textsf p_{ij}$ is the overlying directed subgraph of $H$, we have
\[
\#\{(a,b) \in \textsf p_{ij}\colon a < b\} - \#\{(a,b) \in \textsf p_{ij}\colon a > b\} = c_j - c_i
\]
by repeatedly applying condition (c) of Lemma~\ref{lem:non-tilde-hyperplane-conditions} to the edges $e \in \textsf p_{ij}\subseteq E(H)$. This holds for any such path of $H^\un$, so Equation~\eqref{eqn:path-consistent} is satisfied, and $H$ is path consistent. Importantly, we emphasize that when $i,j \in [n]$ are in the same connected component of $H^\un$, then
\begin{equation}
\label{eqn:coeff-weight-equality}
c_j - c_i = w(j) - w(i),
\end{equation}
where $w$ is the weight function of $H$.

Furthermore, let $\mathcal C$ be a directed cycle of $H_\comp$, consisting of edges $\{e_1^\comp, \dots, e_{|\mathcal C|}^\comp\}$ corresponding to edges $\{e_1, \dots, e_{|\mathcal C|}\}\subseteq E(G)\setminus E(H)$. Denote by $s_i, t_i \in [n] = V(G)$ the source and target of the edge $e_i$ respectively. Since $\mathbf e_{s_i} - \mathbf e_{t_i} \not \in Q_H$, condition (c) of~\ref{lem:non-tilde-hyperplane-conditions} says
\begin{equation}
\label{eqn:coeff-inequality}
c_{s_i} - c_{t_i} > -1
\end{equation}
so
\[
\sum_{i=1}^{|\mathcal C|}\textsf{wd}(e_i^\comp) = \sum_{i=1}^{|\mathcal C|} (w(s_i) - w(t_i)) = \sum_{i=1}^{|\mathcal C|}(w(s_{i+1}) - w(t_i)),
\]
with $s_{|\mathcal C| + 1}\overset{\rm def}= s_1$. Since $\mathcal C$ forms a cycle in $H_\comp$, the target of $e_i^\comp \in E(H_\comp)$ is equal to the source of $e_{i+1}^\comp \in E(H_\comp)$. Thus, the vertices $t_i, s_{i+1} \in V(H)$ are in the same connected component of $H^\un$. Then Equations~\eqref{eqn:coeff-weight-equality} and~\eqref{eqn:coeff-inequality} say
\[
\sum_{i=1}^{|\mathcal C|}(w(s_{i+1}) - w(t_i)) = \sum_{i=1}^{|\mathcal C|} (c_{s_{i+1}} - c_{t_i}) = \sum_{i=1}^{|\mathcal C|}(c_{s_i} - c_{t_i})> -|\mathcal C|.
\]
Thus we have verified Equation~\eqref{eqn:admissible} holds for every cycle $\mathcal C$, and $H$ is admissible.

Suppose now that $H$ is path consistent and admissible. It suffices to provide numbers $c_i$, $i \in [n] = V(H)$, so that conditions (b) and (c) of~\ref{lem:non-tilde-hyperplane-conditions} hold for $c = -1$, i.e.
\begin{equation}
\label{eqn:non-tilde-coeff-condition}
c_i - c_j > -1 \textup{ for } (i,j) \in E(G)\setminus E(H) \qquad \textup{ and } \qquad c_i - c_j = -1 \textup{ for } (i,j) \in E(H).
\end{equation}
By Lemma~\ref{lem:ebl}, there exist numbers $d_i$, $i \in V(H_\comp)$, so that
\[
\textsf{wd}(e) + d_{s(e)} - d_{t(e)} > -1.
\]
Now let $v \in [n] = V(H)$ be a vertex of $H$ and suppose $v \in V(H_{v^\comp}^\un)$ is in the $(v^\comp)$-th connected component of $H^\un$. Then
\[
c_v \overset{\rm def}= w(v) + d_{v^\comp},
\]
where $w$ is the weight function of $H$, satisfies Equation~\eqref{eqn:non-tilde-coeff-condition}: if $e = (i,j) \in E(G)\setminus E(H)$ corresponds to $e^\comp = (i^\comp, j^\comp) \in E(H_\comp)$ then
\[
c_i - c_j = \textsf{wd}(e) + d_{i^\comp} - d_{j^\comp} > -1,
\]
while if $(i,j) \in E(H)$ then (as in Equation~\eqref{eqn:coeff-weight-equality})
\[
c_i - c_j = w(i) - w(j) = -1.\qedhere
\]
\end{proof}
\section{Proof of Lemma~\ref{lem:ebl}}
\label{sec:ebl}
This section contains a proof of Lemma~\ref{lem:ebl}. We feel that Lemma~\ref{lem:ebl-extension} might be of independent interest, although it is largely irrelevant in the context of this paper. \textbf{In this section only}, we temporarily allow $G$ to be a directed multigraph.

In what follows, we will treat signed multisets $\mathcal S$ of edges of $G$ as formal sums
\[
\mathcal S = \sum_{e\in E(G)} m_e(\mathcal S)\cdot e
\]
of edges, where $m_e(\mathcal S)$ is the signed multiplicity of $e$ in $\mathcal S$. We identify the set of formal $\ZZ$-linear combinations of edges of $G$ with $\ZZ^{E(G)}$. 

We treat simple directed cycles $\mathcal C$ and directed paths $\textsf p$ a set of edges, i.e.\ as sums
\[
\mathcal C = \sum_{e\in \mathcal C}e \qquad\textup{ and } \qquad \textsf p = \sum_{e\in\textsf p}e.
\]
\begin{defn}
We let $\mathcal M_G$ denote the abelian group of $\ZZ$-linear combinations of simple directed cycles. The elements of $\mathcal M_G$ are called \textbf{formal cycles}, and $\mathcal M_G$ is a $\ZZ$-submodule of $\ZZ^{E(G)}$.
\end{defn}
\begin{example}
\label{ex:MG-relations}
Simple directed cycles may satisfy relations in $\mathcal M_G$. For example, consider $G$ as in Figure~\ref{fig:MG-relations} below.
\begin{figure}[ht]
\begin{center}\includegraphics{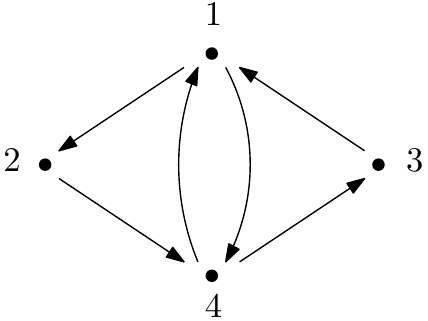}\end{center}
\caption{The graph $G$ in Example~\ref{ex:MG-relations}.}
\label{fig:MG-relations}
\end{figure}

Let $\mathcal C_1 = (1,2) + (2,4) + (4,1)$ and $\mathcal C_2 = (1,4) + (4,3) + (3,1)$. Also let $\mathcal C_3 = (1,2) + (2,4) + (4,3) + (3,1)$ and $\mathcal C_4 = (1,4) + (4,1)$. These are all simple directed cycles, and in $\mathcal M_G$ the relation $\mathcal C_1 + \mathcal C_2 = \mathcal C_3 + \mathcal C_4$ holds.
\end{example}
\begin{defn}
For a formal sum of edges $\mathcal S \in \ZZ^{E(G)}$ and an edge $e \in E(G)$, we let $m_e(\mathcal S)$ denote the coefficient of $e$ in $\mathcal S$. The \textbf{support} of $\mathcal S$ is the set $\{e\in E(G)\colon m_e(\mathcal S)\neq 0\}$ and is denoted $\textup{supp}(\mathcal S)$. We also set
\[
|\mathcal S|\overset{\rm def}=\sum_{e\in E(G)}|m_e(\mathcal S)|.\qedhere
\]
\end{defn}
\begin{lem}
\label{lem:ebl-elements}
For any directed multigraph $G$, the abelian group $\mathcal M_G$ is equal to the set of formal sums $\mathcal S \in \ZZ^{E(G)}$ satisfying
\begin{equation}
\label{eqn:ebl-elements-condition}
\sum_{\substack{e\in E(G)\\s(e)=v}}m_e(\mathcal S)\cdot e = \sum_{\substack{e\in E(G)\\t(e)=v}}m_e(\mathcal S)\cdot e\qquad \textup{ for all }v \in V(G)
\end{equation}
and
\begin{equation}
\label{eqn:ebl-elements-condition-supp}
\textup{supp}(\mathcal S)\subseteq\bigcup_{\mathcal C}\textup{supp}(\mathcal C),
\end{equation}
where the union runs over all simple directed cycles $\mathcal C$ of $G$.
\end{lem}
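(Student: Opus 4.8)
The plan is to prove the two inclusions separately; essentially all the work is in showing that every $\mathcal S$ satisfying~\eqref{eqn:ebl-elements-condition} and~\eqref{eqn:ebl-elements-condition-supp} lies in $\mathcal M_G$. The reverse inclusion is immediate: a single simple directed cycle $\mathcal C$ satisfies~\eqref{eqn:ebl-elements-condition} (flow conservation), since at a vertex on $\mathcal C$ exactly one edge of $\mathcal C$ enters and one leaves --- a self-loop counting once on each side --- and at a vertex off $\mathcal C$ both sides vanish; and $\textup{supp}(\mathcal C)\subseteq E_0$ trivially, where $E_0 = \bigcup_{\mathcal C'}\textup{supp}(\mathcal C')$ denotes the union of the supports of all simple directed cycles. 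As~\eqref{eqn:ebl-elements-condition} is a homogeneous linear condition and~\eqref{eqn:ebl-elements-condition-supp} is the condition of being supported on $E_0$, the set of $\mathcal S$ satisfying both is a subgroup of $\ZZ^{E(G)}$, and this subgroup contains the subgroup $\mathcal M_G$ generated by the simple directed cycles.

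For the other inclusion, fix $\mathcal S$ satisfying~\eqref{eqn:ebl-elements-condition} and~\eqref{eqn:ebl-elements-condition-supp}. The strategy is to add a large nonnegative formal cycle to $\mathcal S$ so as to make all of its coefficients nonnegative, and then apply the classical flow decomposition theorem. For each $e\in E_0$ pick a simple directed cycle $\mathcal C_e$ through $e$ (one exists by the definition of $E_0$), and for $N$ a large positive integer set $\mathcal S^+ = \mathcal S + N\sum_{e\in E_0}\mathcal C_e$. If $e'\notin E_0$, then $\mathcal S$ (by~\eqref{eqn:ebl-elements-condition-supp}) and every $\mathcal C_e$ vanish on $e'$, so $m_{e'}(\mathcal S^+)=0$; if $e'\in E_0$, then since $\mathcal C_{e'}$ is one of the summed cycles and $m_{e'}(\mathcal C_{e'})=1$, we get $m_{e'}(\mathcal S^+)\geq m_{e'}(\mathcal S)+N\geq 0$ once $N$ is large. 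Moreover $\mathcal S^+$ is a $\ZZ$-combination of $\mathcal S$ and simple directed cycles, so it again satisfies~\eqref{eqn:ebl-elements-condition}; hence $\mathcal S^+$ is a nonnegative integer flow. The flow decomposition theorem --- every nonnegative integer flow on a directed multigraph is a sum, with multiplicities, of simple directed cycles --- then gives $\mathcal S^+\in\mathcal M_G$, whence $\mathcal S = \mathcal S^+ - N\sum_{e\in E_0}\mathcal C_e\in\mathcal M_G$. For completeness I would include the short proof of flow decomposition: by induction on $|\mathcal S^+|$, if $\mathcal S^+\neq 0$ then in the subgraph on the edges $e$ with $m_e(\mathcal S^+)\geq 1$, condition~\eqref{eqn:ebl-elements-condition} forces every vertex with an outgoing edge to have an incoming one and conversely, so this subgraph contains a simple directed cycle $\mathcal C$ with $\min_{e\in\mathcal C}m_e(\mathcal S^+)\geq 1$; replacing $\mathcal S^+$ by $\mathcal S^+-\mathcal C$ and iterating exhausts $\mathcal S^+$.

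There is no serious obstacle here, but two points deserve attention. First, the hypothesis~\eqref{eqn:ebl-elements-condition-supp} is used exactly once and is genuinely needed: without it $\mathcal S$ could be negative on an edge outside $E_0$, which no $\mathcal C_e$ can repair, so $\mathcal S^+$ would fail to be nonnegative. (Indeed, for a graph with two parallel edges in the same direction and no other structure, $\mathcal M_G=0$ while nonzero flows exist, so the support condition cannot simply be dropped from the statement either.) Second, since $G$ is allowed to be a multigraph, one must remember that simple directed cycles include self-loops (length $1$) and antiparallel pairs (length $2$); these are harmless in every step above.
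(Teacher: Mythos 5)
Your proof is correct and follows essentially the same strategy as the paper: verify that simple directed cycles satisfy both conditions (so the set in question is a subgroup containing $\mathcal M_G$), then for the converse add cycles to make $\mathcal S$ nonnegative and perform the standard flow-decomposition induction by extracting a simple directed cycle from the support and subtracting it. You are merely more explicit than the paper about the nonnegativity step, spelling out the lift $\mathcal S + N\sum_{e\in E_0}\mathcal C_e$ where the paper just says ``adding directed cycles if necessary,'' which also makes transparent that this is exactly where hypothesis~\eqref{eqn:ebl-elements-condition-supp} is needed.
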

(As in Section~\ref{sec:faces-of-tilde-QG}, the notation $s(e)$ and $t(e)$ stands for the source and target of the edge $e$ respectively.)
\begin{rem}
\label{rem:ebl-elements}
We will use Lemma~\ref{lem:ebl-elements} in the following way: If $H\subseteq G$ is a subgraph which is obtained as a union of directed cycles, and the formal cycle $\mathcal C \in \mathcal M_G$ has support in $H$, then $\mathcal C \in \mathcal M_H$. (That is to say, although $\mathcal C$ comes as a $\ZZ$-linear combination of directed cycles of $G$, it may be replaced by a $\ZZ$-linear combination of directed cycles of $H$.)

For example, with notation as in Example~\ref{ex:MG-relations}, the formal cycle $\mathcal C_* \overset{\rm def}= \mathcal C_1 + \mathcal C_2 - \mathcal C_3 \in \mathcal M_G$ has support in the subgraph $H\subseteq G$ where $E(H) = \{(1,4), (4,1)\}$. As expected, $\mathcal C_*$ is a $\ZZ$-linear combination of directed cycles of $H$, since $\mathcal C_* = \mathcal C_4$.
\end{rem}
\begin{proof}[Proof of Lemma~\ref{lem:ebl-elements}]
Any simple directed cycle satisfies conditions~\eqref{eqn:ebl-elements-condition} and~\eqref{eqn:ebl-elements-condition-supp}; it follows that any formal cycle satisfies conditions~\eqref{eqn:ebl-elements-condition} and~\eqref{eqn:ebl-elements-condition-supp} as well.

Conversely, suppose $\mathcal S$ is a formal sum
\[
\mathcal S = \sum_{e\in E(G)} m_e(\mathcal S)\cdot e
\]
satisfying~\eqref{eqn:ebl-elements-condition} and~\eqref{eqn:ebl-elements-condition-supp}; our goal is to show that $\mathcal S$ is a formal cycle. Adding directed cycles to $\mathcal S$ if necessary, we may assume $m_e(\mathcal S)\geq 0$ for every $e \in E(G)$.

Thus, it suffices to show that nonnegative formal sums of edges satisfying~\eqref{eqn:ebl-elements-condition} and~\eqref{eqn:ebl-elements-condition-supp} are formal cycles. The remainder of the proof is by induction on $|\mathcal S|$. Specifically, we argue that there exists a simple directed cycle $\mathcal C$ of $G$ whose support is contained in $\textup{supp}(\mathcal S)$; because $\mathcal S - \mathcal C$ is again a nonnegative formal sum of edges satisfying~\eqref{eqn:ebl-elements-condition} and~\eqref{eqn:ebl-elements-condition-supp}, the inductive hypothesis guarantees that $\mathcal S - \mathcal C$ is a formal cycle.

Indeed, pick any edge $e = (s,t) \in \textup{supp}(\mathcal S)$; since conditions~\eqref{eqn:ebl-elements-condition} and~\eqref{eqn:ebl-elements-condition-supp} holds for the vertex $t \in V(G)$, there is another edge $e' \in \textup{supp}(\mathcal S)$ whose source is the vertex $t$. By repeating this process, we obtain edges whose concatenation forms a directed path; this path eventually intersects itself and thus contains a simple directed cycle.
\end{proof}
\begin{lem}
\label{lem:ebl-extension}
Let $G$ be a directed multigraph, and let $c\colon \mathcal M_G \to \RR$ be an additive map such that $c(\mathcal C) > -|\mathcal C|$ for any directed cycle $\mathcal C \in \mathcal M_G$. Then $c$ can be extended to an additive map $c\colon \ZZ^{E(G)}\to \RR$ so that $c(e) > -1$ for all $e \in E(G)$.
\end{lem}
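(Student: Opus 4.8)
The plan is to prove Lemma~\ref{lem:ebl-extension} by a finite-dimensional separation (Farkas-type) argument whose combinatorial input is the decomposition of a nonnegative flow into simple directed cycles, exactly as in the proof of Lemma~\ref{lem:ebl-elements}. First I would set up the linear algebra. An additive map $\ZZ^{E(G)}\to\RR$ is the same thing as a vector in $\RR^{E(G)}$ (via the standard pairing), and the given additive map $c\colon\mathcal M_G\to\RR$ extends uniquely to an $\RR$-linear functional $c_\RR$ on $V\overset{\rm def}=\textup{span}_\RR\mathcal M_G\subseteq\RR^{E(G)}$ (a $\ZZ$-basis of $\mathcal M_G$ is an $\RR$-basis of $V$). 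Choosing a linear complement of $V$ and declaring $c_\RR$ to vanish on it yields a vector $\mathbf c_0\in\RR^{E(G)}$ with $\mathbf c_0|_{\mathcal M_G}=c$; the additive extensions of $c$ to $\ZZ^{E(G)}$ are then precisely the vectors $\mathbf c_0+\mathbf v$ with $\mathbf v$ in the orthogonal complement $V^\perp\subseteq\RR^{E(G)}$. So it is enough to prove that the affine subspace $A\overset{\rm def}=\mathbf c_0+V^\perp$ meets the open set $U\overset{\rm def}=\{\mathbf x\in\RR^{E(G)}\colon x_e>-1\text{ for all }e\}$.

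Suppose not. Then $A$ and $U$ are disjoint nonempty convex sets with $U$ open, so a hyperplane separates them: there are $\boldsymbol\lambda\in\RR^{E(G)}\setminus\{\mathbf 0\}$ and $\gamma\in\RR$ with $\langle\boldsymbol\lambda,\mathbf a\rangle\le\gamma\le\langle\boldsymbol\lambda,\mathbf u\rangle$ for all $\mathbf a\in A$ and $\mathbf u\in U$. Since $A$ is a full affine subspace, $\langle\boldsymbol\lambda,\cdot\rangle$ must be constant on $A$, which forces $\boldsymbol\lambda\in(V^\perp)^\perp=V$ and $\langle\boldsymbol\lambda,\mathbf c_0\rangle\le\gamma$. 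Since $U$ contains the translated open orthant $-\mathbf 1+\RR^{E(G)}_{>0}$ and $\langle\boldsymbol\lambda,\cdot\rangle$ is bounded below on $U$, one reads off $\boldsymbol\lambda\ge\mathbf 0$ (componentwise) and $\langle\boldsymbol\lambda,\mathbf 1\rangle\le-\gamma$. Adding the two estimates, and using $\boldsymbol\lambda\in V$ (so $\langle\boldsymbol\lambda,\mathbf c_0\rangle=c_\RR(\boldsymbol\lambda)$) and $\boldsymbol\lambda\ge\mathbf 0$ (so $\langle\boldsymbol\lambda,\mathbf 1\rangle=\sum_e\lambda_e=:|\boldsymbol\lambda|$), this gives
\[
c_\RR(\boldsymbol\lambda)+|\boldsymbol\lambda|\le 0.
\]

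To finish I would contradict this with the hypothesis on cycles. Because $\boldsymbol\lambda\in V$ is a real linear combination of simple directed cycles, it satisfies the flow-conservation equations~\eqref{eqn:ebl-elements-condition}; as $\boldsymbol\lambda\ge\mathbf 0$ and $\boldsymbol\lambda\neq\mathbf 0$, the cycle-peeling argument from the proof of Lemma~\ref{lem:ebl-elements} (run now with real, rather than nonnegative integer, coefficients) produces a decomposition $\boldsymbol\lambda=\sum_{i=1}^N\alpha_i\mathcal C_i$ with $N\ge 1$, each $\alpha_i>0$, and each $\mathcal C_i$ a simple directed cycle of $G$. Each $\mathcal C_i$ lies in $\mathcal M_G$, so $c_\RR(\mathcal C_i)=c(\mathcal C_i)>-|\mathcal C_i|$ by hypothesis; moreover $|\boldsymbol\lambda|=\sum_i\alpha_i|\mathcal C_i|$ since both sides equal $\sum_e\lambda_e$. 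Therefore
\[
c_\RR(\boldsymbol\lambda)=\sum_{i=1}^N\alpha_i\,c(\mathcal C_i)>-\sum_{i=1}^N\alpha_i|\mathcal C_i|=-|\boldsymbol\lambda|,
\]
contradicting the displayed inequality. Hence $A\cap U\neq\emptyset$, and any point of this intersection is an additive extension of $c$ with all values $>-1$. The main obstacle is the separation step: the strict inequalities cutting out $U$ (equivalently, the open orthant inside $U$) must be handled with care in order to extract a separator $\boldsymbol\lambda$ that is simultaneously nonnegative and lies in $V$ with the bound above; the one other point to watch is merely the existence of some extension $\mathbf c_0$ of $c$, which is why I pass through a complement of $V$ rather than manipulating $\ZZ$-modules directly.
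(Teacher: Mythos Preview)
Your argument is correct and takes a genuinely different route from the paper's. The paper proves the lemma constructively by induction on $|E(G)|$: it locates the minimum mean-cycle ratio $W=\min_i c(\mathcal C_i)/|\mathcal C_i|$, assigns the value $W$ to every edge lying on a cycle attaining this minimum (the set $E_I$), then contracts those edges and recurses on the smaller multigraph $(E_I)_\comp$. Most of the effort there goes into verifying that the induced map $c_2$ on $\mathcal M_{(E_I)_\comp}$ is well defined (independent of how a cycle of $(E_I)_\comp$ is lifted back to $G$) and still satisfies the hypothesis. You instead recast the existence of the extension as the question of whether the affine subspace $\mathbf c_0+V^\perp$ meets a translated open orthant, invoke hyperplane separation to produce a would-be obstruction $\boldsymbol\lambda\in V$ with $\boldsymbol\lambda\ge\mathbf 0$, and then use the conformal decomposition of a nonnegative circulation into simple directed cycles (the real-coefficient version of the peeling in Lemma~\ref{lem:ebl-elements}) to contradict the hypothesis. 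Your approach is shorter and makes transparent that the lemma is an LP-duality (Gordan/Farkas-type) alternative; the paper's approach is longer but constructive, and in effect pins down the extension via the minimum-mean-cycle value on $E_I$.
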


\begin{proof}
The proof is by induction on $|E(G)|$. Note that if $G$ has no directed cycles, the lemma is vacuous, so we may assume $G$ has at least one directed cycle. We enumerate the simple directed cycles of $G$ by $\mathcal C_1, \dots, \mathcal C_r$.

Set
\[
W\overset{\rm def}= \min_{i\in[r]}\B\{\frac{c(\mathcal C_i)}{|\mathcal C_i|}\B\}>-1; \quad I\overset{\rm def}= \B\{i \in [r]\colon \frac{c(\mathcal C_i)}{|\mathcal C_i|} = W\B\}; \quad E_I\overset{\rm def}=\{e\in E(G)\colon e \in \mathcal C_i \textup{ for some } i \in I\}.
\]
Let us define an additive map $c_1\colon \ZZ^{E_I}\to \RR$ by setting $c_1(e) = W$ for all $e \in E_I$.

Treating $E_I\subseteq G$ as a subgraph, observe that Lemma~\ref{lem:ebl-elements} implies any formal cycle $\mathcal C \in \mathcal M_G$ with support in $E_I$ is in fact a formal cycle in $\mathcal M_{E_I}$ (cf.\ Remark~\ref{rem:ebl-elements}). In particular, any simple directed cycle $\{\mathcal C_i\colon i \in I\}$ of $E_I$ satisfies $c(\mathcal C_i) = W|\mathcal C_i|$, so additivity of $c$ implies any formal cycle $\mathcal C$ of $G$ with support in $E_I$ also satisfies $c(\mathcal C) = W|\mathcal C|$.

Again treating $E_I\subseteq G$ as a subgraph, we may form the multigraph $(E_I)_\comp$. We will argue that formal cycles $\mathcal D$ of $(E_I)_\comp$ can be described as follows: its corresponding linear combination of edges $\mathcal D' \subseteq G\setminus E_I$ is the restriction of some formal cycle $\mathcal C$ of $G$ to $E(G)\setminus E_I$, i.e.
\begin{equation}
\label{eqn:ebl-restrict}
\mathcal D' = \sum_{e\in E(G)\setminus E_I} m_e(\mathcal C)\cdot e \qquad\textup{ for some } \mathcal C \in\mathcal M_G.
\end{equation}
First note that $E_I$ is a union of directed cycles and hence its weak components are strongly connected; if $\mathcal D$ is a directed cycle of $(E_I)_\comp$ then the corresponding $\mathcal D' \subseteq G\setminus E_I$ can be completed to a directed cycle $\mathcal C$ of $G$ by appending directed paths in $E_I$. Such a directed cycle $\mathcal C$ satisfies~\eqref{eqn:ebl-restrict}. When $\mathcal D$ is a formal sum of cycles of $(E_I)_\comp$, the corresponding formal sum of cycles of $G$ also satisfies~\eqref{eqn:ebl-restrict}.

We now argue, for any formal cycle $\mathcal D \in \mathcal M_{(E_I)_\comp}$, that the quantity
\[
c_2(\mathcal D)\overset{\rm def}=c(\mathcal C) - W\sum_{e\in E_I}m_e(\mathcal C)
\]
is well-defined, independent of the choice of formal cycle $\mathcal C$ satisfying~\eqref{eqn:ebl-restrict}. Let $\mathcal C_1, \mathcal C_2$ be formal cycles satisfying~\eqref{eqn:ebl-restrict}, and consider the formal sums
\[
\mathcal C_1\cap E_I \overset{\rm def}= \sum_{e\in E_I}m_e(\mathcal C_1)\cdot e \qquad\textup{ and } \qquad\mathcal C_2\cap E_I\overset{\rm def}=\sum_{e\in E_I}m_e(\mathcal C_2)\cdot e.
\]
By definition, we may decompose $\mathcal C_1\cap E_I$ and $\mathcal C_2 \cap E_I$ into a $\ZZ$-linear combination of directed paths of $E_I$, each of which connects endpoints of edges of $\mathcal D'\subseteq G\setminus E_I$. Treating paths as sums of edges, we may write
\[
\mathcal C_1\cap E_I = \sum_i a_i\cdot \textsf p_{i,1} \qquad\textup{ and } \qquad \mathcal C_2\cap E_I = \sum_i b_i\cdot \textsf p_{i,2},
\]
where $a_i, b_i \in \ZZ$. For a directed path $\textsf p$, let $s(\textsf p)$ and $t(\textsf p)$ denote the source and target respectively. For $v \in V(E_I)$ and $j \in \{1,2\}$ let
\[
S(v;j)\overset{\rm def}=\{i\colon s(\textsf p_{i,j}) = v\} \qquad\textup{ and } \qquad T(v;j)\overset{\rm def}=\{i\colon t(\textsf p_{i,j}) = v\}.
\]
Since $\mathcal C_1$ and $\mathcal C_2$ satisfy~\eqref{eqn:ebl-restrict} for the same formal cycle $\mathcal D \in \mathcal M_{(E_I)_\comp}$, we have
\[
\sum_{i\in S(v;1)} a_i = \sum_{i \in S(v;2)} b_i \qquad \textup{ and } \qquad \sum_{i\in T(v;1)} a_i = \sum_{i \in T(v;2)} b_i
\]
for all $v \in V(E_I)$.

Thus, $\mathcal C_1\cap E_I$ and $\mathcal C_2\cap E_I$ can be \emph{simultaneously} completed to a formal cycle of $E_I$, i.e.\ there exist formal cycles $(\mathcal C_1)^I$ and $(\mathcal C_2)^I$ of $E_I$ so that
\[
\begin{cases} m_e((\mathcal C_1)^I) = m_e(\mathcal C_1) \qquad\textup{ for all } e \in E_I\cap \textup{supp}(\mathcal C_1)\\
m_e((\mathcal C_2)^I) = m_e(\mathcal C_2) \qquad\textup{ for all } e\in E_I\cap \textup{supp}(\mathcal C_2)\\
m_e((\mathcal C_1)^I) = m_e((\mathcal C_2)^I) \qquad\textup{ for all other } e \in E_I.
\end{cases}
\]
Note that $\mathcal C_1 + (\mathcal C_2)^I = (\mathcal C_1)^I + \mathcal C_2$ in $\mathcal M_G$, and hence
\[
c(\mathcal C_1) + W|(\mathcal C_2)^I| = c(\mathcal C_2) + W|(\mathcal C_1)^I|.
\]
Rearranging terms, we obtain
\[
c(\mathcal C_1) - W|\mathcal C_1\cap E_I| = c(\mathcal C_2) - W|\mathcal C_2\cap E_I|.
\]
We conclude $c_2\colon \mathcal M_{(E_I)_\comp}\to \RR$ is well-defined.

The function $c_2\colon \mathcal M_{(E_I)_\comp}\to \RR$ is additive, since restriction commutes with summation: if $\mathcal C_1$ and $\mathcal C_2$ are formal cycles of $G$ whose restrictions to $G\setminus E_I$ are $(\mathcal D_1)'$ and $(\mathcal D_2)'$ respectively, then the restriction of $\mathcal C_1 + \mathcal C_2$ to $G\setminus E_I$ is $(\mathcal D_1)'+ (\mathcal D_2)'$.

Furthermore, if $\mathcal D$ is a directed cycle of $(E_I)_\comp$, then minimality of $W$ implies
\[
\frac{c_2(\mathcal D)}{|\mathcal D|} = \frac{c(\mathcal C) - W|\mathcal C\cap E_I|}{|\mathcal D|} > \frac{\frac{c(\mathcal C)}{|\mathcal C|}|\mathcal C| - \frac{c(\mathcal C)}{|\mathcal C|}|\mathcal C\cap E_I|}{|\mathcal D|} = \frac{\frac{c(\mathcal C)}{|\mathcal C|}|\mathcal D|}{|\mathcal D|}= \frac{c(\mathcal C)}{|\mathcal C|} > -1.
\]
Since $|E((E_I)_\comp)| < |E(G)|$, the inductive hypothesis asserts that the function $c_2$ extends to an additive map $c_2\colon \ZZ^{E((E_I)_\comp)}\to \RR$ so that $c_2(e) > -1$ for all $e \in E((E_I)_\comp)$; identifying $E((E_I)_\comp)$ with $E(G)\setminus E_I$ we obtain an additive map $c_2\colon \ZZ^{E(G)\setminus E_I}\to \RR$.

The functions $c_1\colon \ZZ^{E_I}\to \RR$ and $c_2\colon \ZZ^{E(G)\setminus E_I}\to \RR$ glue to a function $\ZZ^{E(G)}\to \RR$ which we claim extends $c\colon \mathcal M_G\to \RR$. To verify this claim, we must check that if $\mathcal C$ is a simple directed cycle of $G$, then
\begin{equation}
\label{eqn:ebl-extension-condition}
\sum_{e\in \mathcal C\cap E_I}c_1(e) + \sum_{e\in \mathcal C\cap (G\setminus E_I)}c_2(e) = c(\mathcal C).
\end{equation}
By the definitions of $c_1$ and $c_2$, we have
\[
\sum_{e\in \mathcal C\cap E_I}c_1(e) = W|\mathcal C\cap E_I| \qquad\textup{ and } \qquad \sum_{e\in \mathcal C\cap (G\setminus E_I)}c_2(e) = c(\mathcal C) - W|\mathcal C\cap E_I|,
\]
so Equation~\eqref{eqn:ebl-extension-condition} is satisfied.
\end{proof}

We can now prove Lemma~\ref{lem:ebl}, restated here for convenience:

\newtheorem*{lem:ebl}{Lemma~\ref{lem:ebl}}
\begin{lem:ebl}
Let $H\subseteq G$ be an admissible subgraph of $G$. There is a vector $\mathbf d = (d_v)_{v \in V(H_\comp)} \in \RR^{V(H_\comp)}$ so that
\begin{equation}
\label{eqn:ebl-required}
\textup{\textsf{wd}}(e) + d_{s(e)} - d_{t(e)} > -1
\end{equation}
for every edge $e \in E(H_\comp)$.
\end{lem:ebl}

\begin{proof}[Proof of Lemma~\ref{lem:ebl}]
Let $M^T$ denote the transpose of the incidence matrix of $H_\comp$, i.e.\ the matrix corresponding to the linear transformation 
\begin{align*}
M^T\colon \RR^{V(H_\comp)} &\to \RR^{E(H_\comp)}\\
\mathbf e_i &\mapsto \sum_{\substack{e\in E(H_\comp)\\s(e) = i}} \mathbf e_e - \sum_{\substack{e\in E(H_\comp)\\ t(e) = i}} \mathbf e_e.
\end{align*}
Let $\textsf{wd}(H_\comp) \in \RR^{E(H_\comp)}$ denote the vector whose component indexed by $e \in E(H_\comp)$ is $\textsf{wd}(e)$. Then Lemma~\ref{lem:ebl} asks for a vector $\mathbf d \in \RR^{V(H_\comp)}$ so that
\begin{equation}
\label{eqn:d-required}
\textsf{wd}(H_\comp) + M^T\mathbf d > -\mathbf 1,
\end{equation}
where $\mathbf 1\in \RR^{E(H_\comp)}$ is the vector whose components are all equal to 1.

The image of $M^T$ is equal to the cut space of $H_\comp$, i.e.\ the space
\[
W = \B\{\mathbf x \in \RR^{E(H_\comp)}\colon \sum_{e \in \mathcal C}x_e = 0\textup{ for all directed cycles $\mathcal C$ of $H_\comp$}\B\};
\]
see e.g.~\cite[Thm.\ II.3.9, Ex.\ II.4.39]{bollobas1998}.

Because $H$ is admissible, the additive function
\begin{align*}
c\colon \mathcal M_{H_\comp}&\to \RR\\
\mathcal C&\mapsto \sum_{e\in\textup{supp}(\mathcal C)}m_e(\mathcal C)\cdot \textsf{wd}(e)
\end{align*}
satisfies $c(\mathcal C) > -|\mathcal C|$ for every directed cycle $\mathcal C$ of $\mathcal M_{H_\comp}$, so Lemma~\ref{lem:ebl-extension} guarantees that $c$ can be extended to an additive function $c\colon \ZZ^{E(G)}\to \RR$ with $c(e) > -1$. Let $\mathbf c \in \RR^{E(H_\comp)}$ denote the vector whose component indexed by $e \in E(H_\comp)$ is $c(e)$; by definition, $\mathbf c > -\mathbf 1$. The condition that
\[
\sum_{e \in \mathcal C} c(e) = \sum_{e \in \mathcal C}\textsf{wd}(e)
\]
for every directed cycle $\mathcal C$ of $H_\comp$ is exactly the condition
\[
\textsf{wd}(H_\comp) - \mathbf c \in W,
\]
so $\textsf{wd}(H_\comp) - \mathbf c = M^T\mathbf v$ for some $\mathbf v \in \RR^{V(H_\comp)}$. Rearranging,
\[
\textsf{wd}(H_\comp) + M^T(-\mathbf v) = \mathbf c > -\mathbf 1,
\]
so $\mathbf d\colonequals -\mathbf v$ satisfies Equation~\eqref{eqn:d-required}.
\end{proof}

\section{Consequences of Theorems~\ref{thm:tilde-faces} and~\ref{thm:non-tilde-faces}; relations to previous results}
In this section, we explore consequences of Theorems~\ref{thm:tilde-faces} and~\ref{thm:non-tilde-faces}. In Corollaries~\ref{cor:portakal1} and~\ref{cor:portakal2} we highlight a result of Portakal characterizing faces of the form $\tilde Q_H\subseteq \tilde Q_G$ for alternating graphs $G$. In Corollary~\ref{cor:transitively-closed-facets}, we show that Theorem~\ref{thm:non-tilde-faces} specializes to a result of Postnikov characterizing facets of the form $Q_H\subset\tilde Q_G$ for transitively closed graphs $G$ (Definition~\ref{defn:transitively-closed}). We also highlight the special case $G = K_n$ in Corollaries~\ref{cor:tilde-faces-K_n} and~\ref{cor:non-tilde-faces-K_n}; the latter corollary corrects a result of Gelfand, Graev, and Postnikov (see Remark~\ref{rem:ggp-comparison}).

We will use the following notation.

\begin{defn}
Let $G$ be a directed graph and let $A\subseteq V(G)$. The set of \textbf{neighbors} of $A$, denoted $N(A)$, is the set
\[
N(A) \overset{\rm def}= \{v\in V(G)\colon (v,a) \in E(G) \textup{ for some } a \in A\}\cup\{v\in V(G)\colon (a,v) \in E(G) \textup{ for some } a \in A\}.
\]
We say $A\subseteq V(G)$ is \textbf{independent} if it is disjoint from $N(A)$.
\end{defn}

Recall (see Lemma~\ref{lem:alternating-is-bipartite}) that the vertex set of an alternating graph may be partitioned into disjoint sets $L$ and $R$ consisting of source and sink vertices respectively. In this setting, Theorem~\ref{thm:tilde-faces} may be recast as follows.

\begin{cor}[{\cite[Thm.\ 3.12]{portakal2019}}]
\label{cor:portakal1}
Let $G$ be an alternating graph and suppose $G^\un$ is connected. The subgraph $H\subseteq G$ defines a facet $\tilde Q_H\subseteq\tilde Q_G$ if and only if $H^\un$ has two connected components and
\begin{equation}
\label{eqn:facet-db}
H = G|_{A\sqcup N(A)}\sqcup G|_{[n]\setminus (A\sqcup N(A))}
\end{equation}
for some set $A\subset R$ of sink vertices.
\end{cor}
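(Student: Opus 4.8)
The plan is to deduce this corollary from Theorem~\ref{thm:tilde-faces} by unwinding what ``$H_\comp$ is loopless and acyclic'' means when $G$ is alternating and connected, and when we additionally demand that $\tilde Q_H$ be a \emph{facet} (codimension one). First I would record the dimension count: by Proposition~\ref{prop:root-polytope-dimension-general}, $\tilde Q_G$ is $(n-1)$-dimensional (since $G^\un$ is connected, $r=1$), and if $H^\un$ has $m$ connected components then $\tilde Q_H$ is $(n-m)$-dimensional. So $\tilde Q_H$ is a facet precisely when $\tilde Q_H$ is a face and $m=2$, i.e.\ $H^\un$ has exactly two connected components. Thus the content of the corollary is: \emph{among subgraphs $H\subseteq G$ with two connected components, the face condition of Theorem~\ref{thm:tilde-faces} is equivalent to $H$ having the form~\eqref{eqn:facet-db} for some $A\subseteq R$.}

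Next I would use Proposition~\ref{prop:loopless-criterion}: $H_\comp$ is loopless iff $H$ is the disjoint union of induced subgraphs $G|_{P_1}\sqcup\cdots\sqcup G|_{P_m}$ for a partition $\mathcal P=\{P_i\}$ of $[n]$; here $P_i=V(H_i^\un)$. When $m=2$ write the partition as $\{P, [n]\setminus P\}$, so $H = G|_P\sqcup G|_{[n]\setminus P}$ and $H_\comp$ has two vertices, call them $p$ and $q$. The remaining condition is that $H_\comp$ be acyclic, i.e.\ that it not contain both an edge $p\to q$ and an edge $q\to p$. An edge of $H_\comp$ from $p$ to $q$ comes from an edge of $G$ directed from a vertex of $P$ to a vertex of $[n]\setminus P$; since $G$ is alternating with source set $L$ and sink set $R$, every edge of $G$ goes from $L$ to $R$. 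So: there is an edge $p\to q$ in $H_\comp$ iff some edge of $G$ goes from $P\cap L$ to $([n]\setminus P)\cap R$, and an edge $q\to p$ iff some edge of $G$ goes from $([n]\setminus P)\cap L$ to $P\cap R$. Acyclicity forbids both simultaneously.

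Then I would translate the surviving alternative into the stated normal form. Suppose (after possibly swapping the roles of $P$ and $[n]\setminus P$, which just relabels the two components of $H^\un$) that $H_\comp$ has no edge $q\to p$: every edge of $G$ with sink in $P$ also has source in $P$. Set $A := P\cap R$. Then $N(A)\subseteq P$: any neighbor of a vertex $a\in A=P\cap R$ is the source of an edge of $G$ ending at $a$, hence lies in $P$ by our assumption. I claim $P = A\sqcup N(A)$ after also throwing in any isolated-in-$G$ vertices appropriately — more carefully, I should argue that $P$ is exactly $A\cup N(A)$ together with possibly some vertices that are isolated in $G$; since connectedness of $G^\un$ forces every vertex to be non-isolated, $P\setminus(A\cup N(A))$ consists of vertices in $P\cap L$ none of whose edges land in $P$, but such a vertex's edges all go to $R\setminus P\subseteq[n]\setminus P$, which would contradict... — actually this needs care, because such an $L$-vertex in $P$ is allowed; it simply would not be forced to lie in $P$. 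The honest statement is that $H = G|_{A\sqcup N(A)}\sqcup G|_{[n]\setminus(A\sqcup N(A))}$ for $A=P\cap R$, which requires checking $G|_P = G|_{A\sqcup N(A)}$ as graphs, i.e.\ that the extra $L$-vertices in $P\setminus(A\cup N(A))$ carry no edges of $G|_P$ and can be freely moved to the other side without changing $H$. Conversely, I would check the easy direction: for any $A\subseteq R$, the graph in~\eqref{eqn:facet-db} has $H_\comp$ loopless (it is a disjoint union of induced subgraphs) with no edge from the $[n]\setminus(A\sqcup N(A))$-component into the $(A\sqcup N(A))$-component, since no edge of $G$ has its sink in $N(A)$... wait, rather since edges into $A$ come from $N(A)$ and there are no edges of $G$ from outside $A\sqcup N(A)$ into $A$ by definition of $N(A)$ — hence $H_\comp$ is acyclic, so $\tilde Q_H$ is a face, and it is a facet when $H^\un$ has two components.

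The main obstacle I expect is exactly this bookkeeping in the forward direction: pinning down that the partition part $P$ can be normalized to the form $A\sqcup N(A)$, handling the ``free'' source-vertices that belong to neither $A$ nor $N(A)$, and confirming that $H^\un$ genuinely has two components (not more) — one must rule out, e.g., $A\sqcup N(A)$ itself being disconnected, which is where one uses that $G$ is alternating (so $G|_{A\sqcup N(A)}$ is connected as long as... it need not be, actually, so the cleaner formulation is to allow the two ``parts'' of~\eqref{eqn:facet-db} to be unions of the actual components of $H^\un$, and note the corollary's phrasing ``$H^\un$ has two connected components'' is an additional hypothesis being characterized). I would therefore organize the proof as: (i) facet $\iff$ face with $m=2$ via the dimension formula; (ii) face with $m=2$ $\iff$ $H=G|_P\sqcup G|_{[n]\setminus P}$ with $H_\comp$ acyclic, via Theorem~\ref{thm:tilde-faces} and Proposition~\ref{prop:loopless-criterion}; (iii) acyclicity of the two-vertex $H_\comp$ $\iff$ one-directional flow between $P$ and $[n]\setminus P$ $\iff$ $P$ has the normal form $A\sqcup N(A)$ with $A\subseteq R$; (iv) the converse direction, verifying~\eqref{eqn:facet-db} always yields a facet.
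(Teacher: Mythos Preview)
Your overall strategy---reduce ``facet'' to ``face with two components'' via Proposition~\ref{prop:root-polytope-dimension-general}, then use Theorem~\ref{thm:tilde-faces} and Proposition~\ref{prop:loopless-criterion} to rewrite the face condition as $H=G|_P\sqcup G|_{[n]\setminus P}$ with one-way traffic in $H_\comp$---is exactly the paper's. The paper labels the source component $H_1$, sets $A=V(H_1^\un)\cap R$, and argues $H_1=G|_{A\sqcup N(A)}$, which is precisely your step (iii).

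The obstacle you flag is real, and your proposed fix does not work. If $v\in P\cap L$ lies outside $N(A)$ (i.e.\ has no $G$-edge into $A=P\cap R$), then indeed $v$ carries no edges of $G|_P$; but moving $v$ to $[n]\setminus P$ \emph{adds} every edge $(v,r)$ with $r\in R\setminus P$ to $G|_{[n]\setminus P}$, so $H$ changes. Since $H_1^\un$ is connected, the only way such a $v$ exists is when $V(H_1^\un)=\{v\}$ is a singleton in $L$; in that case $A=\emptyset$ and~\eqref{eqn:facet-db} returns $H=G$, which is wrong.

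This is not merely bookkeeping: the paper's own proof glosses over exactly this point (it passes from ``$H_1=G|_{A\sqcup N(A)}$ as edge sets'' to ``$V(H_2^\un)=[n]\setminus(A\sqcup N(A))$'' without justifying $V(H_1^\un)=A\sqcup N(A)$), and the statement as written appears to miss such facets. Concretely, take $G=K_{2,2}$ with $L=\{1,2\}$, $R=\{3,4\}$ (the $G$ of Example~\ref{ex:admissible-necessary}); the subgraph $H$ with edges $(1,3),(1,4)$ has two components and $H_\comp$ is loopless and acyclic, so $\tilde Q_H$ is a facet by Theorem~\ref{thm:tilde-faces}---yet no $A\subseteq R$ produces this $H$ via~\eqref{eqn:facet-db}. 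A corrected statement would also allow $A\subset L$ (equivalently, apply the $L\leftrightarrow R$ symmetry), which does recover these facets; your argument would then go through once you observe that connectedness of $H_1^\un$ forces $V(H_1^\un)\cap L\subseteq N(A)$ whenever $|V(H_1^\un)|>1$, and handle the singleton case by switching to the sink component and taking $A\subset L$ there.
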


\begin{proof}
Suppose first that $\tilde Q_H$ is a facet of $\tilde Q_G$. Since $G$ is connected, we have $\dim(\tilde Q_G) = n-1$ and hence $\dim(\tilde Q_H) = n-2$. Proposition~\ref{prop:root-polytope-dimension-general} implies $H^\un$ must have two connected components which we denote by $H_1^\un$ and $H_2^\un$. Theorem~\ref{thm:tilde-faces} asserts that the two-vertex graph $H_\comp$ is loopless and acyclic; because $G$ is connected the graph $H_\comp$ must have an edge which, without loss of generality, sends $H_1^\comp \in V(H_\comp)$ to $H_2^\comp \in V(H_\comp)$.

Proposition~\ref{prop:loopless-criterion} implies that $H$ is a disjoint union of induced subgraphs of $G$: specifically, we may write
\[
H = G|_{V(H_1^\un)} \sqcup G|_{V(H_2^\un)}.
\]
Set
\[
A\overset{\rm def}= V(H_1^\un) \cap R;
\]
observe that $H_1\subseteq G|_{A\sqcup N(A)}$: every edge of $H_1$ has target in $A$. Furthermore, because $H_1$ is a source vertex in $H_\comp$, every edge $e = (v,a) \in E(G)$ incident to a vertex in $A$ must be in $H_1$. It follows that $H_1 = G|_{A\sqcup N(A)}$. Since $V(H_2^\un) = [n] \setminus V(H_1^\un)$, we conclude that $H$ has the form~\eqref{eqn:facet-db}.

Suppose now that $H^\un$ has two connected components $H_1^\un$ and $H_2^\un$ and has the form~\eqref{eqn:facet-db}. Proposition~\ref{prop:loopless-criterion} implies $H_\comp$ is loopless.

Observe that $(G|_{A\sqcup N(A)})^\un$ and $(G|_{[n]\setminus (A\sqcup N(A))})^\un$ are both connected: if either had (at least) two connected components, then the other would be empty and $H = G$. We conclude that the two vertices of $H_\comp$ correspond to $H_1 \colonequals G|_{A\sqcup N(A)}$ and $H_2\colonequals G|_{[n]\setminus (A\sqcup N(A))}$. Note that no edge of $E(G)\setminus E(H)$ can have target in $A$. Furthermore, since $A\subset R$ we have $N(A) \subseteq L$; hence no edge of $E(G)\setminus E(H)$ can have target in $N(A)$. Hence $H_\comp$ has no edges whose target is $H_1^\comp \in V(H_\comp)$. In total, we have shown $H_\comp$ is loopless and acyclic, so Theorem~\ref{thm:tilde-faces} implies $\tilde Q_H\subseteq \tilde Q_G$ is a face. Since $H$ has two connected components, $\dim(\tilde Q_H) = n-2$ and $\tilde Q_H$ is a facet.
\end{proof}

\begin{cor}[{\cite[Thm.\ 3.17]{portakal2019}}]
\label{cor:portakal2}
Let $G$ be an alternating graph and suppose $G^\un$ is connected. The subgraph $H\subseteq G$ defines a face $\tilde Q_H\subseteq \tilde Q_G$ of codimension $d$ if and only if $H^\un$ has $d+1$ connected components and can be written as the intersection $H = H_1\cap \dots \cap H_d$ of $d$ many graphs for which $\tilde Q_{H_i}$ is a facet of $\tilde Q_G$.
\end{cor}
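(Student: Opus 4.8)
The plan is to prove Corollary~\ref{cor:portakal2} by combining Theorem~\ref{thm:tilde-faces} (or rather its specialization Corollary~\ref{cor:portakal1}), the dimension formula of Proposition~\ref{prop:root-polytope-dimension-general}, and the general polytope fact Lemma~\ref{lem:dual-trick}. First I would handle the easy direction: if $H = H_1 \cap \cdots \cap H_d$ where each $\tilde Q_{H_i} \subseteq \tilde Q_G$ is a facet, then $\tilde Q_H = \tilde Q_{H_1} \cap \cdots \cap \tilde Q_{H_d}$ (intersecting subgraphs corresponds to intersecting root polytopes, since a vertex lies in all the $\tilde Q_{H_i}$ iff the corresponding edge lies in every $E(H_i)$), and an intersection of faces is a face. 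To pin down the codimension, I note $H^\un$ has $d+1$ connected components by hypothesis, so Proposition~\ref{prop:root-polytope-dimension-general} gives $\dim \tilde Q_H = n - (d+1)$, i.e.\ codimension $d$ in the $(n-1)$-dimensional polytope $\tilde Q_G$.

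For the forward direction, suppose $\tilde Q_H \subseteq \tilde Q_G$ is a face of codimension $d$. By Proposition~\ref{prop:root-polytope-dimension-general}, $H^\un$ has exactly $d+1$ connected components. By Lemma~\ref{lem:dual-trick}, $\tilde Q_H$ is the intersection of $d$ facets $F_1, \dots, F_d$ of $\tilde Q_G$. The key point is that each of these facets contains the origin: indeed, $\mathbf 0 \in \tilde Q_H \subseteq F_i$. By Proposition~\ref{prop:faces-of-root-are-root}, each $F_i$ is a root polytope, and since it contains the origin it must be of the form $\tilde Q_{H_i}$ for some subgraph $H_i \subseteq G$. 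So $\tilde Q_H = \tilde Q_{H_1} \cap \cdots \cap \tilde Q_{H_d}$, and taking vertex sets this translates into $E(H) = E(H_1) \cap \cdots \cap E(H_d)$, i.e.\ $H = H_1 \cap \cdots \cap H_d$ as subgraphs on vertex set $[n]$. By Corollary~\ref{cor:portakal1}, each $H_i$ has the explicit facet-defining form, completing the proof.

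The main obstacle I anticipate is the bookkeeping around the translation between "intersection of faces of the polytope" and "intersection of subgraphs," together with making sure the dimension count is airtight. Specifically: (i) I must confirm that $\tilde Q_{H_1} \cap \cdots \cap \tilde Q_{H_d}$, viewed as a polytope, really is $\tilde Q_{H_1 \cap \cdots \cap H_d}$ — this uses that the vertices of $\tilde Q_G$ are affinely characterized, so a vertex lies in the intersection iff it lies in each piece, hence the common vertices are $\mathbf 0$ together with edges in every $E(H_i)$; and (ii) in the forward direction, I need $\dim \tilde Q_H = n - (d+1)$ to conclude $H^\un$ has $d+1$ components, which is exactly Proposition~\ref{prop:root-polytope-dimension-general} applied with $r = d+1$ and uses $\dim \tilde Q_G = n-1$ (valid since $G^\un$ is connected). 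One subtlety worth a sentence: Lemma~\ref{lem:dual-trick} yields $d$ facets whose intersection is $\tilde Q_H$, but a priori these facets need not be distinct as subgraphs; however distinctness is not needed for the statement, which only asks that $H$ be \emph{writable} as such an intersection, so I would not belabor this.

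One final remark I would include for clarity: the hypothesis that $G^\un$ is connected is used precisely to guarantee $\dim \tilde Q_G = n-1$, so that "codimension $d$" and "$\dim \tilde Q_H = n - d - 1$" are interchangeable; this is what lets the connected-component count $d+1$ enter on both sides of the equivalence.
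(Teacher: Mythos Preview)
Your proposal is correct and follows essentially the same approach as the paper's own proof: both directions hinge on Lemma~\ref{lem:dual-trick}, the observation that any facet containing $\mathbf 0$ is of the form $\tilde Q_{H_i}$, the vertex-level translation between $\tilde Q_{H_1}\cap\cdots\cap\tilde Q_{H_d}$ and $\tilde Q_{H_1\cap\cdots\cap H_d}$, and Proposition~\ref{prop:root-polytope-dimension-general} for the component count. Your added remarks (on the role of connectedness of $G^\un$, and the irrelevance of distinctness of the $H_i$) are accurate elaborations; the closing invocation of Corollary~\ref{cor:portakal1} is harmless but unnecessary, since the statement only requires the $\tilde Q_{H_i}$ to be facets.
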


\begin{proof}
Suppose $\tilde Q_H\subseteq\tilde Q_G$ is a face of codimension $d$. By Lemma~\ref{lem:dual-trick}, it is the intersection of some $d$ facets $F_1, \dots, F_d$ of $\tilde Q_G$. These facets must contain the origin, so $F_i = \tilde Q_{H_i}$ for some graphs $H_i$. Furthermore, since $\tilde Q_H$ has codimension $d$, Proposition~\ref{prop:root-polytope-dimension-general} asserts $H^\un$ must have $d+1$ connected components.

Now suppose $H^\un$ has $d+1$ connected components and assume $H = H_1\cap \dots \cap H_d$, where $\tilde Q_{H_i}$ is a facet of $\tilde Q_G$. Observe that the vertices of the polytope $\tilde Q_H$ are precisely the common vertices of the polytopes $\tilde Q_{H_i}$ for $i \in [d]$. It follows that the polytope $\tilde Q_H$ is the intersection of the polytopes $\tilde Q_{H_i}$ for $i \in [d]$. Hence $\tilde Q_H \subseteq \tilde Q_G$ is a face; because $H^\un$ has $d+1$ connected components, Proposition~\ref{prop:root-polytope-dimension-general} asserts $H^\un$ must have codimension $d$.
\end{proof}

To state Postnikov's result we begin with the following definition:

\begin{defn}
\label{defn:transitively-closed}
A graph $G$ is called \textbf{transitively closed} if whenever $(i,j), (j,k) \in E(G)$ are edges of $G$, then $(i,k) \in E(G)$ is also an edge of $G$.
\end{defn}

\begin{defn}
Let $L, R \subset [n]$ be disjoint subsets of $[n] = V(G)$. The subgraph $G_{L,R}\subseteq G$ is the (alternating) graph whose edge set is
\[
E(G_{L,R}) = \{(i,j) \in E(G)\colon i \in L, j \in R\}.
\]
We call such graphs \textbf{alternating-induced} subgraphs of $G$.
\end{defn}

\begin{example}
\label{ex:K5-LR}
Let $G = K_5$, $L = \{1,3\}$, and $R = \{2,5\}$. Then $G_{L,R}$ is the graph in Figure~\ref{fig:K5-LR}.
\begin{figure}[ht]
\begin{center}
\includegraphics[scale=0.7]{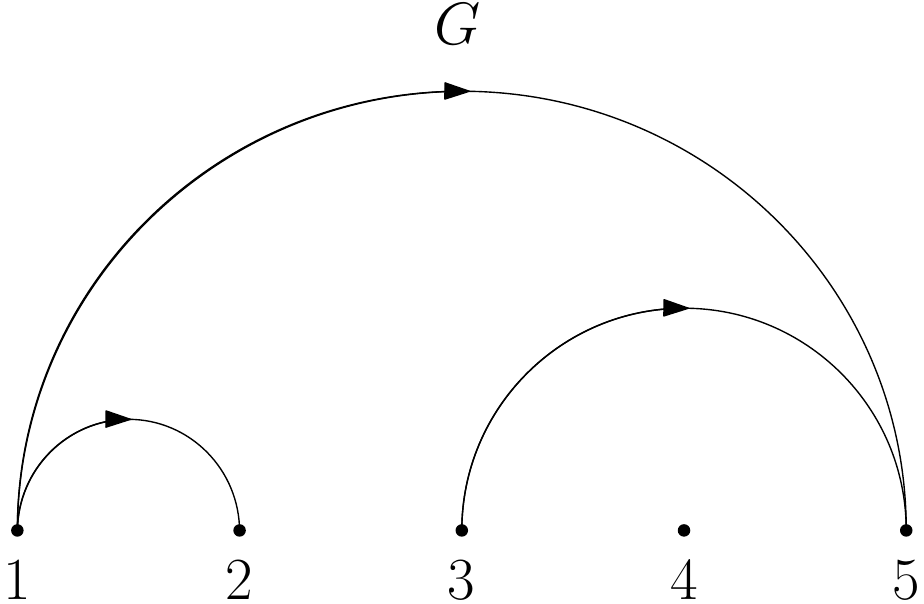}
\end{center}
\caption{The graph $(K_5)_{\{1,3\},\{2,5\}}$ in Example~\ref{ex:K5-LR}.}
\label{fig:K5-LR}
\end{figure}
\end{example}

We will apply the following proposition to obtain corollaries of Theorem~\ref{thm:non-tilde-faces}. Although it is more general than necessary for the purposes of this paper, the proof is essentially the same, so we include it here.

\begin{prop}
\label{prop:trans-closed-non-tilde-necessary}
Let $G$ be transitively closed and let $H\subseteq G$ be a path consistent and admissible subgraph. Then
\begin{equation}
\label{eqn:trans-closed-non-tilde-necessary}
H = \bigsqcup_{P_i \in \mathcal P}(G|_{P_i})_{L_i, R_i}
\end{equation}
is the disjoint union of alternating-induced subgraphs of induced subgraphs of $G$.
\end{prop}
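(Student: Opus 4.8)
The plan is to extract from the hypotheses two structural facts: first, that $H$ decomposes as a disjoint union of induced subgraphs along a partition of $[n]$ (looplessness of $H_\comp$), and second, that on each block of this partition $H$ restricts to an alternating-induced subgraph of the corresponding induced subgraph of $G$. Since $H$ is path consistent and admissible, Theorem~\ref{thm:non-tilde-faces} tells us $Q_H\subset\tilde Q_G$ is a face, hence there is a supporting hyperplane $S = \{\sum c_ix_i = -1\}$ as in Lemma~\ref{lem:non-tilde-hyperplane-conditions}. I would work directly with these numbers $c_i$.

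First I would establish the partition. Since $G$ is transitively closed, if $i,j$ lie in the same connected component of $H^\un$ then some undirected path in $H^\un$ connects them, and by transitive closure the edge between $i$ and $j$ (in whichever direction) lies in $E(G)$. I claim that $H_\comp$ is loopless: if $(i,j)\in E(G)$ with $i,j$ in the same component of $H^\un$, then by path consistency the weight function satisfies $w(j) - w(i) = 1$, so $c_j - c_i = w(j)-w(i) = 1$ (using Equation~\eqref{eqn:coeff-weight-equality}); but condition~(c) of Lemma~\ref{lem:non-tilde-hyperplane-conditions} then forces $(i,j)\in E(H)$, since $c_i = c_j + c = c_j - 1$ exactly characterizes edges of $H$ among edges of $G$, and here $c_i - c_j = -1$. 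Hence $H_\comp$ is loopless, and Proposition~\ref{prop:loopless-criterion} gives the partition $\mathcal P = \{V(H_i^\un)\}$ with $H = \bigsqcup_i G|_{V(H_i^\un)}$.

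Next I would show that each $G|_{V(H_i^\un)}$ is alternating, which would let me write it as $(G|_{P_i})_{L_i,R_i}$ with $L_i\sqcup R_i = P_i$. Suppose for contradiction that $G|_{P_i}$ has a vertex $j$ with edges $(a,j)$ and $(j,b)$ both in $E(G|_{P_i}) = E(H|_{P_i})$, i.e.\ both in $E(H)$. By transitive closure $(a,b)\in E(G)$, and since $a,b\in P_i$ are in the same component of $H^\un$, looplessness forces $(a,b)\in E(H)$ as well. Now using condition~(c): $c_a - c_j = -1$ and $c_j - c_b = -1$ (as edges of $H$), so $c_a - c_b = -2$. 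But $(a,b)\in E(H)$ demands $c_a - c_b = -1$, a contradiction. Therefore $G|_{P_i}$ is alternating; by Lemma~\ref{lem:alternating-is-bipartite} applied componentwise it has a source/sink partition $L_i\sqcup R_i$, and since an induced alternating graph $G|_{P_i}$ contains \emph{all} edges of $G$ between $L_i$ and $R_i$ (it is induced), we get $G|_{P_i} = (G|_{P_i})_{L_i,R_i}$, which is~\eqref{eqn:trans-closed-non-tilde-necessary}.

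The main obstacle I anticipate is the bookkeeping around the weight function on disconnected components — one must be careful that Equation~\eqref{eqn:coeff-weight-equality} only relates $c$ to $w$ \emph{within} a single connected component of $H^\un$, so the contradiction arguments above are deliberately confined to vertices $a,b,j$ lying in one component $P_i$. This is exactly what makes transitive closure essential: it produces the needed extra edges $(a,b)$ of $G$ inside a single component, where the weight-function machinery applies. Once that is internalized the argument is a short chain of inequalities; the delicate part is not the computation but ensuring every invocation of path consistency, looplessness, and condition~(c) is made within a component of $H^\un$ where it is valid.
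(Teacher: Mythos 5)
Your argument has a genuine gap in the first step, and the rest of the proof collapses because of it. You claim that path consistency plus admissibility force $H_\comp$ to be loopless, and hence (via Proposition~\ref{prop:loopless-criterion}) that $H = \bigsqcup_i G|_{V(H_i^\un)}$ with each $G|_{P_i}$ alternating. Both claims are false. Consider $G = K_3$ and $H$ with $E(H) = \{(1,2),(1,3)\}$. Then $K_3$ is transitively closed, $H^\un$ is connected, and one computes $w(1)=0$, $w(2)=w(3)=1$. The edge $(2,3)\in E(G)\setminus E(H)$ gives a self-loop $e_\comp$ in $H_\comp$ with $\textsf{wd}(e_\comp) = w(2)-w(3) = 0 > -1$, so $H$ is path consistent (trivially: $H^\un$ is a tree) and admissible, yet $H_\comp$ has a loop. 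Moreover $G|_{\{1,2,3\}} = K_3$ is not alternating and $H \ne K_3$, so the decomposition you derive simply does not describe $H$; the correct decomposition is $H = (K_3)_{\{1\},\{2,3\}}$, which is a proper alternating-\emph{induced} subgraph of the induced subgraph. Your error traces to the unjustified assertion that $(i,j)\in E(G)$ with $i,j$ in the same component of $H^\un$ implies $w(j)-w(i)=1$; path consistency only says the quantity $w(j)-w(i)=\ell_{ij}$ is well-defined by paths \emph{in $H$}, and in the example above it equals $0$.

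The fix is to argue that $H$ itself (not $G|_{P_i}$) is alternating, and only afterwards identify each component with an alternating-induced subgraph. Concretely: if $(a,j),(j,b)\in E(H)$, transitive closure gives $(a,b)\in E(G)$, and then your hyperplane computation $c_a - c_b = -2$ contradicts either condition~(c) (if $(a,b)\in E(H)$) or condition~(b) (if $(a,b)\notin E(H)$) of Lemma~\ref{lem:non-tilde-hyperplane-conditions}; this is a valid reorganization of your step~2 and is essentially the paper's first step, phrased via the $c_i$'s rather than directly via path consistency and a self-loop of weight decrease $-2$. Once $H$ is alternating, set $L_i,R_i$ to be the source and sink vertices of the component $H_i$, so $H_i \subseteq (G|_{V(H_i)})_{L_i,R_i}$; then any edge $e \in E((G|_{V(H_i)})_{L_i,R_i})\setminus E(H_i)$ is a self-loop in $H_\comp$ with $\textsf{wd}(e_\comp) = 0 - 1 = -1$, violating admissibility for the cycle $\{e_\comp\}$. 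That second step is what your proof is missing entirely: you never rule out extra edges of $G$ inside a component except via the false looplessness claim.
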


\begin{proof}
Suppose $H\subseteq G$ is path consistent and admissible. We first claim that $H$ is alternating. Indeed, if there exist vertices $i,j,k\in [n] = V(H)$ with $(i,j), (j,k) \in E(H)$, then $(i,k) \in E(G)$ because $G$ is transitively closed. If $(i,k) \in E(H)$ then $H$ is not path consistent, since $\textsf p_{ik} = ((i,j), (j,k))$ and $\textsf q_{ik} = ((i,k))$ violate Equation~\eqref{eqn:path-consistent}, and if $(i,k) \not \in E(H)$ then $H$ is not admissible, since the edge $e = (i,k) \in E(G)\setminus E(H)$ corresponds to a self-loop $e_\comp\in E(H_\comp)$ with $\textsf{wd}(e_\comp) = -2$, violating Equation~\eqref{eqn:admissible}.

Now let $H_i$ denote the overlying directed graph of the connected component $H_i^\un$ of $H^\un$. If $H_i$ is not an isolated vertex, every vertex $v\in V(H_i)$ is either the source of an edge or the target of an edge in $E(H_i)$. Define the (disjoint) subsets
\begin{align*}
L_i &\overset{\rm def}=\{v\in V(H_i)\colon v \textup{ is the source of some $e \in E(H_i)$}\},\\
R_i&\overset{\rm def}=\{v\in V(H_i)\colon v \textup{ is the target of some $e \in E(H_i)$}\}.
\end{align*}
Observe that $H_i \subseteq (G|_{V(H_i)})_{L_i, R_i}$. An edge $e \in E((G|_{V(H_i)})_{L_i, R_i}) \setminus E(H_i)$ corresponds to a loop $e_\comp = (H_i^\un, H_i^\un) \in E(H_\comp)$ with $\textsf{wd}(e_\comp) = -1$, violating Equation~\eqref{eqn:admissible}. It follows that $H_i = (G|_{V(H_i)})_{L_i,R_i}$.
\end{proof}

We use Proposition~\ref{prop:trans-closed-non-tilde-necessary} to deduce Postnikov's result from Theorem~\ref{thm:non-tilde-faces}. For $G = K_n$, this result appeared in the earlier work of~\cite[Prop.\ 13]{cho1999}.

\begin{cor}[{\cite[Prop.\ 13.3]{postnikov2009}}]
\label{cor:transitively-closed-facets}
Let $G$ be transitively closed and suppose $G^\un$ is connected. The subgraph $H\subset G$ defines a facet $Q_H\subseteq\tilde Q_G$ of $\tilde Q_G$ not containing the origin if and only if $H^\un$ is connected and $H = G_{L,R}$ is alternating-induced by some partition $L \sqcup R = [n]$.
\end{cor}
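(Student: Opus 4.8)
The plan is to deduce Corollary~\ref{cor:transitively-closed-facets} from Theorem~\ref{thm:non-tilde-faces} together with Proposition~\ref{prop:trans-closed-non-tilde-necessary} and the dimension count in Proposition~\ref{prop:root-polytope-dimension-general}. First I would handle the forward direction: suppose $Q_H\subset\tilde Q_G$ is a facet not containing the origin. Since $G^\un$ is connected, Proposition~\ref{prop:root-polytope-dimension-general} gives $\dim\tilde Q_G = n-1$, so a facet has dimension $n-2$. By Theorem~\ref{thm:non-tilde-faces}, $H$ is path consistent and admissible, so Proposition~\ref{prop:trans-closed-non-tilde-necessary} writes $H = \bigsqcup_{P_i\in\mathcal P}(G|_{P_i})_{L_i,R_i}$ as a disjoint union of alternating-induced subgraphs. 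Each such piece $(G|_{P_i})_{L_i,R_i}$ is alternating, and I would note that $H$ is therefore alternating with source set $L = \bigsqcup L_i$; if $H^\un$ had $r$ connected components, then $Q_H$ would have dimension $n-r-1$ by the second part of Proposition~\ref{prop:root-polytope-dimension-general}. Matching $n-r-1 = n-2$ forces $r=1$, i.e.\ $H^\un$ is connected, so the partition $\mathcal P$ is trivial: $H = G_{L,R}$ with $L\sqcup R = [n]$ (any vertex not in $L\cup R$ would be isolated, contradicting connectedness of $H^\un$, provided $G$ has at least one edge — the edgeless case is degenerate and can be dismissed).

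For the converse, suppose $H^\un$ is connected and $H = G_{L,R}$ for a partition $L\sqcup R = [n]$. Since $H$ is alternating, Example~\ref{ex:alternating-is-path-consistent} tells us $H$ is path consistent. I would then check admissibility: because $G$ is transitively closed and $H = G_{L,R}$ is the \emph{full} alternating-induced subgraph on the partition, every edge of $G$ either already lies in $H$ or, when we pass to $H_\comp$, we need to understand the weight decreases. Here $H^\un$ is connected, so $H_\comp$ has a single vertex and every edge of $E(G)\setminus E(H)$ becomes a self-loop; admissibility requires each such loop $e$ to satisfy $\textsf{wd}(e) > -1$, i.e.\ $\textsf{wd}(e) \ge 0$. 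An edge $(i,j)\in E(G)\setminus E(H)$ with $i<j$ fails to be in $H = G_{L,R}$ precisely when $i\notin L$ or $j\notin R$, i.e.\ $i\in R$ or $j\in L$; since every vertex is in $L$ or $R$, the cases are $i\in R$ (so $i$ has weight $1$ since it is a sink of $H$... ) — more carefully, the weight function $w$ of the bipartite graph $H$ takes value $0$ on $L$ and $1$ on $R$ (normalizing so a weight source is in $L$), and $\textsf{wd}((i,j)) = w(i) - w(j)$. If $i\in R, j\in R$ then $\textsf{wd} = 1-1 = 0$; if $i\in L, j\in L$ then $\textsf{wd} = 0-0 = 0$; if $i\in R, j\in L$ then $\textsf{wd} = 1 - 0 = 1$; the case $i\in L, j\in R$ cannot occur since then $(i,j)\in E(H)$. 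In all surviving cases $\textsf{wd}\ge 0 > -1$, so every self-loop satisfies the admissibility inequality (and there are no longer directed cycles to check, as $H_\comp$ has one vertex — a directed cycle through it of length $k$ has weight-decrease sum $\ge 0 > -k$). Hence $H$ is admissible, Theorem~\ref{thm:non-tilde-faces} gives that $Q_H\subset\tilde Q_G$ is a face not containing the origin, and since $H^\un$ is connected, Proposition~\ref{prop:root-polytope-dimension-general} gives $\dim Q_H = n-2$, so it is a facet.

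The main obstacle I anticipate is the admissibility computation in the converse direction, specifically pinning down the weight function of the bipartite graph $H = G_{L,R}$ and correctly enumerating which edges of $G$ land outside $H$ and become self-loops in $H_\comp$. One must be careful that $L\sqcup R = [n]$ is a genuine partition (no leftover vertices), that transitive closure of $G$ is what guarantees $H$ is the \emph{entire} alternating-induced graph with no ``missing'' edges forcing extra loops with $\textsf{wd} = -1$ (this is exactly the content of the final paragraph of the proof of Proposition~\ref{prop:trans-closed-non-tilde-necessary}), and that the normalization of the weight function is consistent with Proposition~\ref{prop:weight-function}. The rest — the dimension bookkeeping and invoking Theorem~\ref{thm:non-tilde-faces} — is routine.
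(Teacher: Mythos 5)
Your proposal is correct and follows essentially the same route as the paper: forward direction via Theorem~\ref{thm:non-tilde-faces}, Proposition~\ref{prop:trans-closed-non-tilde-necessary}, and the dimension count in Proposition~\ref{prop:root-polytope-dimension-general}; converse via Example~\ref{ex:alternating-is-path-consistent} for path consistency and the same case-by-case weight-decrease computation ($\textsf{wd}(e)\in\{0,1\}$ for self-loops in the one-vertex $H_\comp$) for admissibility. The only additions beyond the paper's proof are the explicit remark that longer directed cycles through the single vertex of $H_\comp$ also satisfy the admissibility inequality and the note about the degenerate edgeless case, both minor clarifications of the same argument.
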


\begin{proof}
Let $Q_H\subset\tilde Q_G$ be a facet. By Theorem~\ref{thm:non-tilde-faces}, $H\subseteq G$ is path consistent and admissible. By Proposition~\ref{prop:trans-closed-non-tilde-necessary}, $H$ has the form~\eqref{eqn:trans-closed-non-tilde-necessary}.

Since $G^\un$ is connected, Proposition~\ref{prop:root-polytope-dimension-general} says $\tilde Q_G$ is $(n-1)$-dimensional, so the facet $Q_H\subseteq \tilde Q_G$ is $(n-2)$-dimensional. Since $H$ is alternating, Proposition~\ref{prop:root-polytope-dimension-general} implies that $H^\un$ has one connected component. It follows that the partition $\mathcal P$ appearing in~\eqref{eqn:trans-closed-non-tilde-necessary} can only contain one part, i.e.\ $H = G_{L,R}$ for some disjoint $L,R\subset[n]$. If $H$ is to contain no isolated vertices, we further obtain $L\sqcup R = [n]$.

Conversely, suppose $H\subseteq G$ is a subgraph so that $H^\un$ is connected and $H = G_{L,R}$ for some $L\sqcup R = [n]$. By Proposition~\ref{prop:root-polytope-dimension-general}, $\dim \tilde Q_G = n-1$ and $\dim Q_H = n-2$, so it suffices to show that $Q_H\subseteq \tilde Q_G$ is a face. Since $H$ is alternating, it is automatically path consistent (as shown in Example~\ref{ex:alternating-is-path-consistent}).

 Note also that $H_\comp$ consists of a single vertex with a self loop corresponding to each edge $e = (i,j) \in E(G)\setminus E(G_{L,R})$, and $\textsf{wd}(e) = 0$ when $i,j \in L$ or $i,j \in R$, whereas $\textsf{wd}(e) = 1$ when $i \in R$ and $j \in L$. In both cases, Equation~\eqref{eqn:admissible} is satisfied and $H$ is admissible. Since $H\subseteq G$ is path consistent and admissible, Theorem~\ref{thm:non-tilde-faces} implies $Q_H\subset\tilde Q_G$ is a face.
\end{proof}

The case $G = K_n$ of Theorems~\ref{thm:tilde-faces} and~\ref{thm:non-tilde-faces} is of special interest, and we spell them out here.

\begin{cor}
\label{cor:tilde-faces-K_n}
The subgraph $H\subseteq K_n$ forms a face $\tilde Q_H\subseteq \tilde Q_{K_n}$ if and only if
\begin{equation}
\label{eqn:tilde-faces-K_n}
H = K_{[1,n_1]} \sqcup K_{[n_1+1,n_2]}\sqcup\dots\sqcup K_{[n_\ell+1,n]}
\end{equation}
is a disjoint union of complete graphs on vertex sets $[n_i + 1, n_{i+1}] \overset{\rm def}=\{n_i+1, n_i+2, \dots, n_{i+1}\}$.
\end{cor}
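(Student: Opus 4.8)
The plan is to derive Corollary~\ref{cor:tilde-faces-K_n} directly from Theorem~\ref{thm:tilde-faces}, using Proposition~\ref{prop:loopless-criterion} to handle looplessness and a transitivity argument to handle acyclicity. The key observation is that an induced subgraph of $K_n$ on a vertex set $P_i \subseteq [n]$ is simply the complete graph on $P_i$, so by Proposition~\ref{prop:loopless-criterion}, $H_\comp$ is loopless if and only if $H = \bigsqcup_{P_i \in \mathcal P} K_{P_i}$ for some partition $\mathcal P$ of $[n]$. It then remains to characterize which such partitions additionally make $H_\comp$ acyclic, and the answer is exactly those $\mathcal P$ whose parts are contiguous intervals of $[n]$.

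For the forward direction, suppose $\tilde Q_H \subseteq \tilde Q_{K_n}$ is a face. By Theorem~\ref{thm:tilde-faces}, $H_\comp$ is loopless and acyclic; looplessness plus Proposition~\ref{prop:loopless-criterion} gives $H = \bigsqcup_i K_{P_i}$ for a partition $\mathcal P = \{P_i\}$. I claim each $P_i$ is an interval. If not, some part $P_i$ contains vertices $a < c$ with some $b$, $a < b < c$, lying in a different part $P_j$. Since $G = K_n$, the edges $(a,b)$ and $(b,c)$ both lie in $E(K_n) \setminus E(H)$, so $H_\comp$ contains an edge $P_i \to P_j$ (from $(a,b)$) and an edge $P_j \to P_i$ (from $(b,c)$), forming a directed $2$-cycle — contradicting acyclicity. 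Hence the parts are intervals, and after relabeling they have the form in~\eqref{eqn:tilde-faces-K_n}.

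For the converse, suppose $H = K_{[1,n_1]} \sqcup K_{[n_1+1,n_2]} \sqcup \dots \sqcup K_{[n_\ell+1,n]}$. Since $H$ is the disjoint union of the induced subgraphs $K_{[n_i+1,n_{i+1}]}$ over the partition of $[n]$ into these intervals, Proposition~\ref{prop:loopless-criterion} gives that $H_\comp$ is loopless. The vertices of $H_\comp$ are the intervals $I_0 = [1,n_1], I_1 = [n_1+1,n_2], \dots$, and any edge of $H_\comp$ comes from an edge $(a,b) \in E(K_n)$ with $a < b$ lying in distinct intervals $I_p$ and $I_q$; since the intervals are ordered along $[n]$ and $a < b$, we have $p < q$. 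Thus every edge of $H_\comp$ goes from an earlier interval to a later one, so the linear order on the index set $\{0,1,\dots,\ell\}$ witnesses that $H_\comp$ is acyclic. By Theorem~\ref{thm:tilde-faces}, $\tilde Q_H \subseteq \tilde Q_{K_n}$ is a face.

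I do not expect any serious obstacle here: the proof is a short specialization, and the only point requiring a moment of care is making the ``non-interval part forces a $2$-cycle'' argument precise (one must exhibit $a < b < c$ with $a, c$ in one part and $b$ in another, which follows from a part failing to be an interval). All the real work has already been done in Theorem~\ref{thm:tilde-faces} and Proposition~\ref{prop:loopless-criterion}.
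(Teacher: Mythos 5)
Your proof is correct and follows essentially the same route as the paper's: apply Theorem~\ref{thm:tilde-faces}, use Proposition~\ref{prop:loopless-criterion} to reduce looplessness to $H=\bigsqcup K_{P_i}$, and observe that a non-interval part yields a directed $2$-cycle in $H_\comp$ (the paper phrases this as ``if $i<j<k$, $i\in P_a$, $j\in P_b\neq P_a$, then $k\notin P_a$''). Both directions match the paper's argument, with your write-up being marginally more explicit about the $2$-cycle.
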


\begin{proof}
By Theorem~\ref{thm:tilde-faces}, it suffices to characterize subgraphs $H\subseteq K_n$ so that $H_\comp$ is loopless and acyclic. By Proposition~\ref{prop:loopless-criterion}, $H_\comp$ is loopless if and only if $H$ is the disjoint union of induced subgraphs $\{(K_n)|_{P_i}\}_{P_i \in \mathcal P}$, which are just complete graphs $\{K_{P_i}\}_{P_i\in\mathcal P}$ on vertex sets $P_i\subseteq[n]$. The acyclicity of $H_\comp$ implies that if $i < j < k$ and $i \in P_a$, $j \in P_b \neq P_a$, then $k \not \in P_a$. Thus, $P_a$ consists of consecutive numbers $\{n_i + 1, \dots, n_{i+1}\}$. If the partition $\mathcal P = \{P_i\}$ is of the form $P_i = [n_i+1, n_{i+1}]$, it is immediate that $H_\comp$ is acyclic.
\end{proof}

\begin{cor}
\label{cor:non-tilde-faces-K_n}
The subgraph $H\subseteq K_n$ forms a face $Q_H\subset\tilde Q_{K_n}$ if and only if
\begin{equation}
\label{eqn:non-tilde-faces-K_n}
H = (K_{[1,n_1]})_{L_1,R_1} \sqcup (K_{[n_1+1,n_2]})_{L_2,R_2} \sqcup\dots\sqcup (K_{[n_\ell+1,n]})_{L_{\ell+1}, R_{\ell+1}}
\end{equation}
is a disjoint union of alternating-induced subgraphs of complete graphs on vertex sets $[n_i+1,n_{i+1}]$.
\end{cor}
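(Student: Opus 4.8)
The plan is to combine Theorem~\ref{thm:non-tilde-faces} with Proposition~\ref{prop:trans-closed-non-tilde-necessary}. By Theorem~\ref{thm:non-tilde-faces}, $Q_H\subset\tilde Q_{K_n}$ is a face if and only if $H$ is path consistent and admissible; since $K_n$ is transitively closed, Proposition~\ref{prop:trans-closed-non-tilde-necessary} then shows such an $H$ decomposes as $H=\bigsqcup(K_{V(H_i)})_{L_i,R_i}$ over the connected components $H_i^\un$ of $H^\un$ (this is the partition produced in that proof), with $V(H_i)=L_i\sqcup R_i$ for the non-trivial ones. So the work is to upgrade this \emph{component} decomposition to the \emph{consecutive-interval} decomposition claimed in~\eqref{eqn:non-tilde-faces-K_n}, using admissibility; conversely I must check that every $H$ of the form~\eqref{eqn:non-tilde-faces-K_n} is path consistent and admissible.

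First I would record the structure of a non-trivial component $C=(K_{V(C)})_{L_C,R_C}$ of $H^\un$: every vertex of $C$ lies on an edge and each edge points from $L_C$ to $R_C$ with its source the smaller endpoint, so $\min C\in L_C$ and $\max C\in R_C$; moreover the weight function $w$ of $H$ (cf.\ Proposition~\ref{prop:weight-function}) is $0$ on $L_C$ and $1$ on $R_C$, and $0$ on isolated vertices. In particular $w$ is $\{0,1\}$-valued, so a directed cycle $\mathcal C$ of $H_\comp$ violates~\eqref{eqn:admissible} precisely when every edge of $\mathcal C$ has weight decrease $-1$, i.e.\ the corresponding edge $(s,t)\in E(K_n)\setminus E(H)$ has $w(s)=0$ and $w(t)=1$. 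This reformulation is the workhorse in both directions.

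For the forward direction, given $H$ path consistent and admissible, I would show that of any two distinct non-trivial components $C,C'$, one lies entirely to the left of the other: otherwise $\min C<\max C'$ and $\min C'<\max C$, and the $K_n$-edges $(\min C,\max C')$ and $(\min C',\max C)$ — which are not in $H$ — form a directed $2$-cycle in $H_\comp$ with both weight decreases equal to $-1$, contradicting~\eqref{eqn:admissible}. Then the pairwise disjoint ranges $[\min C_t,\max C_t]$ can be ordered $\max C_1<\min C_2<\dots<\max C_p$, and I would take the partition of $[n]$ into the intervals $[\min C_t,\max C_t]$ together with the (possibly empty) ``gap'' intervals before $C_1$, between consecutive ranges, and after $C_p$. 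Each range $[\min C_t,\max C_t]$ contains no vertex of any other component, so $H$ restricted to it equals $(K_{[\min C_t,\max C_t]})_{L_{C_t},R_{C_t}}$, while $H$ restricted to each gap interval $I$ is edgeless, i.e.\ $(K_I)_{I,\emptyset}$; this exhibits $H$ in the form~\eqref{eqn:non-tilde-faces-K_n}. For the converse, if $H=\bigsqcup_t(K_{I_t})_{L_t,R_t}$ with the $I_t$ consecutive intervals, then $H$ is alternating (a source lies in some $L_t$, a sink in some $R_s$, and $L_t\cap R_s=\emptyset$ always, with $t\neq s$ forcing $I_t\cap I_s=\emptyset$), hence path consistent by Example~\ref{ex:alternating-is-path-consistent}; and for admissibility I would observe that each component of $H^\un$ lies in a single block, that each block contributes at most one non-trivial component (if $l\le l'$ are non-isolated vertices of $L_t$, then $l'$ joins some $r'\in R_t$ with $r'>l'\ge l$, so $(l,r')$ is an edge), and therefore a directed cycle of $H_\comp$ with all weight decreases $-1$ would produce a strictly increasing cyclic sequence of block indices — impossible. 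Theorem~\ref{thm:non-tilde-faces} then finishes.

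I expect the main obstacle to be the bookkeeping in the two admissibility arguments — translating carefully between the abstract multigraph $H_\comp$ with its weight-decrease labels and the edges of $K_n$ — together with nailing down the two structural facts I leaned on (that a non-trivial component has its minimum in $L_C$ and maximum in $R_C$, and that within a block all non-isolated vertices form one component). Neither is difficult, but both are easy to get subtly wrong, and the reformulation ``violates~\eqref{eqn:admissible} $\iff$ every edge has weight decrease $-1$'' must be invoked exactly where the $\{0,1\}$-valuedness of $w$ holds.
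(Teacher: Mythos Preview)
Your proposal is correct and follows essentially the same route as the paper: reduce via Theorem~\ref{thm:non-tilde-faces} and Proposition~\ref{prop:trans-closed-non-tilde-necessary}, then exhibit a $2$-cycle in $H_\comp$ with both weight decreases equal to $-1$ to force the consecutive-interval structure, and for the converse verify path consistency via alternating and admissibility via the weight-decrease bookkeeping. Your converse is in fact more careful than the paper's: the paper asserts that $H_\comp$ is the complete (hence acyclic) graph on $V(H_\comp)$, but this fails whenever a block contains isolated vertices (e.g.\ $H=(K_{[1,4]})_{\{1\},\{4\}}$ yields a $2$-cycle $\{1,4\}\to\{2\}\to\{1,4\}$ in $H_\comp$), whereas your argument---that a violating cycle would force a strictly increasing cyclic sequence of block indices---is the correct way to verify~\eqref{eqn:admissible}.
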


\begin{proof}
By Theorem~\ref{thm:non-tilde-faces}, it suffices to characterize path consistent, admissible subgraphs $H\subseteq K_n$. Let $H\subseteq K_n$ be such a graph. Since $K_n$ is transitively closed, Proposition~\ref{prop:trans-closed-non-tilde-necessary} asserts that
\[
H = \bigsqcup_{P_i \in \mathcal P} (K_{P_i})_{L_i, R_i}
\]
is a disjoint union of alternating-induced subgraphs of complete graphs on vertex sets $P_i\in\mathcal P$. To show that $H$ is of the form~\eqref{eqn:non-tilde-faces-K_n}, it suffices to show that if $i,j,k \in [n] = V(H)$ with $i < j < k$, and $(i,k) \in E(H)$, then either $(i,j) \in E(H)$, $(j,k) \in E(H)$, or $j$ is an isolated vertex. (This would imply that the partition $\mathcal P = \{P_i\}$ can be chosen so that $i,k \in P_*$ implies $j \in P_*$, i.e., so that the parts are consecutive blocks of numbers.)

With the above goal in mind, consider any triple $i < j < k$ with $(i,k)\in E(H)$, and suppose $j$ is not in the same connected component of $H^\un$ as $i$ and $k$; we want to show that $j$ is isolated. If there is an edge $(j,\ell) \in E(H)$, then the edges $e_{i\ell} = (i,\ell)$ and $e_{jk} = (j,k)$ of $E(K_n)$ give rise to a directed cycle $\mathcal C = \{(e_{i\ell})_\comp, (e_{jk})_\comp\}$ in $H_\comp$. Since $\textsf{wd}((e_{i\ell})_\comp) = \textsf{wd}((e_{jk})_\comp) = -1$, Equation~\eqref{eqn:admissible} is violated and $H$ is not admissible.

Similarly, if there is an edge $(\ell,j) \in E(H)$, then the edges $e_{\ell k} = (\ell,k)$ and $e_{ij} = (i,j)$ of $E(K_n)$ give rise to a directed cycle $\mathcal C = \{(e_{\ell k})_\comp, (e_{ij})_\comp\}$ in $H_\comp$. Since $\textsf{wd}((e_{\ell k})_\comp) = \textsf{wd}((e_{ij})_\comp) = -1$, Equation~\eqref{eqn:admissible} is violated and $H$ is not admissible.

Conversely, if $H$ is of the form~\eqref{eqn:non-tilde-faces-K_n}, then it is alternating and hence path consistent, and the directed graph $H_\comp$ is nothing more than the complete graph on $V(H_\comp)$; in particular it is acyclic and Equation~\eqref{eqn:admissible} is satisfied, so $H$ is admissible as well.
\end{proof}

\begin{rem}
\label{rem:ggp-comparison}
The faces $Q_H\subset\tilde Q_{K_n}$ were studied already in~\cite[Prop.\ 8.1]{ggp1997}. Their result contains a mistake; it states that there is a bijection
\[
\rho\colon \{H\colon Q_H\subset\tilde Q_{K_n} \textup{ is a face}\}\longleftrightarrow \{\textup{alternating-induced subgraphs } (K_n)_{L,R}\}
\]
such that $H\subseteq\rho(H)$. This is false for $n = 4$, as for the graphs $H_1$ and $H_2$ in Figure~\ref{fig:H1-H2}, the condition $H\subseteq\rho(H)$ forces $\rho(H_1) = \rho(H_2) = H_2$. Yet, $Q_{H_1}$ is an edge of the triangular facet $Q_{H_2}$ of $\tilde Q_{K_4}$, and indeed Corollary~\ref{cor:non-tilde-faces-K_n} asserts that
\[
H_1 = (K_{\{1,2\}})_{\{1\},\{2\}}\sqcup (K_{\{3,4\}})_{\{3\},\{4\}} \qquad \textup{ and }\qquad H_2 = (K_4)_{\{1,3\},\{2,4\}}
\]
form distinct faces of $Q_H\subset\tilde Q_{K_n}$. 
\begin{figure}[ht]
\begin{center}
\includegraphics[scale=0.7]{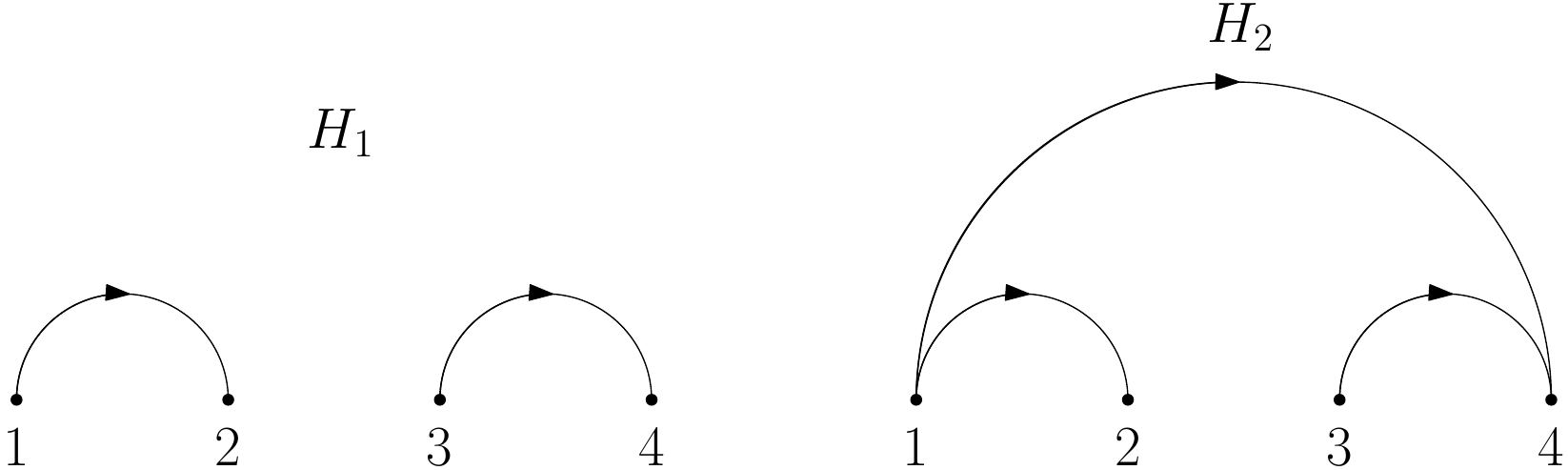}
\end{center}
\caption{The graphs $H_1$ and $H_2$ in Remark~\ref{rem:ggp-comparison}.}
\label{fig:H1-H2}
\end{figure}

Compare~\cite[Prop.\ 8.1]{ggp1997} to Corollary~\ref{cor:non-tilde-faces-K_n}, which asserts that the identity map is a bijection
\[
\textup{id}\colon \{H\colon Q_H\subset\tilde Q_{K_n}\textup{ is a face}\}\longleftrightarrow \{\textup{disjoint unions of alternating-induced subgraphs $(K_{[n_i+1, n_{i+1}]})_{L_i, R_i}$}\}.\qedhere
\]
\end{rem}

\begin{rem}
Corollaries~\ref{cor:tilde-faces-K_n} and~\ref{cor:non-tilde-faces-K_n} give rise to the tantalizing question of explicitly computing the $f$-vector of $\tilde Q_{K_n}$. Specifically, let us highlight that by Proposition~\ref{prop:root-polytope-dimension-general} there are
\begin{align*}
\#\{\textup{graphs of the form~\eqref{eqn:tilde-faces-K_n} with } &n-d \textup{ connected components}\} \\&+\#\{\textup{graphs of the form~\eqref{eqn:non-tilde-faces-K_n} with $n-d-1$ connected components}\}
\end{align*}
faces of dimension $d$. The first term of the summand is easily shown to be
\[
\#\{\textup{graphs of the form~\eqref{eqn:tilde-faces-K_n} with $n-d$ connected components}\} = \binom {n-1}{n-d-1},
\]
as the graph $H$ is uniquely determined by the numbers $1\leq n_1 < \dots < n_{n-d-1} \leq n-1$. 

We record here that a graph $H$ of the form~\eqref{eqn:non-tilde-faces-K_n} arises from a unique choice of $L_i, R_i$ satisfying the additional condition
\begin{equation}
\label{eqn:non-tilde-faces-K_n-bijection}
\min(L_i \cup R_i) \in L_i \qquad \textup{ and } \qquad \max(L_i\cup R_i) \in R_i,
\end{equation}
and that conversely any collection of disjoint sets $L_i, R_i$ satisfying condition~\eqref{eqn:non-tilde-faces-K_n-bijection} and $\max(R_i) < \min(L_{i+1})$ uniquely determines the graph $H$, since we may recover
\[
E(H) = \{(a,b)\colon a \in L_i, b \in R_i \textup{ for some $i$}\}.
\]
In other words, we have a bijection
\[
\{H \textup{ of the form~\eqref{eqn:non-tilde-faces-K_n}}\} \longleftrightarrow \{\textup{disjoint sets $L_i, R_i \subset[n]$ satisfying~\eqref{eqn:non-tilde-faces-K_n-bijection} and $\max(R_i) < \min(L_{i+1})$}\}.
\]
The graph $H$ corresponding to the sets $\{L_1, R_1, \dots, L_\ell, R_\ell\}$ under this bijection is so that $H^\un$ has $\ell$ connected components containing an edge, along with
\[
n - \sum_{i=1}^\ell(|L_i| + |R_i|)
\]
many isolated vertices.
\end{rem}
\section{Acknowledgements}
I am grateful to Karola M\'esz\'aros for her sustained guidance and for many suggestions which improved the quality of this manuscript and its previous drafts. I am also grateful to the anonymous referees for many useful comments. I also thank Florian Frick for helpful discussions about polytopes, as well as Kabir Kapoor, \.Irem Portakal, and especially Seraphina Lee for many stimulating conversations.

\begin{bibdiv}
\begin{biblist}
\bib{abhps2011}{article}{
   author={Ardila, Federico},
   author={Beck, Matthias},
   author={Ho\c sten, Serkan},
   author={Pfeifle, Julian},
   author={Seashore, Kim},
   title={Root polytopes and growth series of root lattices},
   journal={SIAM J.\ Disc.\ Math.},
   volume={25},
   date={2011},
   pages={360--378}
}
\bib{bollobas1998}{book}{
   author={Bollob\'as, B\'ela},
   title={Modern graph theory},
   series={Graduate Texts in Mathematics},
   volume={184},
   publisher={Springer, New York},
   date={1998} 
}
\bib{cho1999}{article}{
   author={Cho, Soojin},
   title={Polytopes of roots of type $A_n$},
   journal={Bull.\ Austral.\ Math.\ Soc.},
   volume={59},
   date={1999},
   pages={391--402}
}
\bib{cm2015}{article}{
   author={Cellini, Paola},
   author={Marietti, Mario},
   title={Root polytopes and Borel subalgebras},
   journal={Int.\ Math.\ Res.\ Not.},
   date={2015},
   number={12},
   pages={4392--4420}
}
\bib{em2016}{article}{
   author={Escobar, Laura},
   author={M\'esz\'aros, Karola},
   title={Toric matrix Schubert varieties and their polytopes},
   journal={Proc.\ Amer.\ Math.\ Soc.},
   volume={144},
   date={2016},
   number={12},
   pages={5081--5096}
}
\bib{em2018}{article}{
   author={Escobar, Laura},
   author={M\'esz\'aros, Karola},
   title={Subword complexes via triangulations of root polytopes},
   journal={Algebr.\ Comb.},
   volume={1},
   date={2018},
   number={3},
   pages={395--414}
}
\bib{ggp1997}{article}{
   author={Gelfand, Israel M.},
   author={Graev, Mark I.},
   author={Postnikov, Alexander},
   title={Combinatorics of hypergeometric functions associated with positive roots},
   book={
      title={The Arnold-Gelfand Mathematical Seminars},
      publisher={Birkh\"auser Boston},
      address={Boston},
   }
   pages={205--221}
   date={1997},
}
\bib{gnp2018}{article}{
   author={Galashin, Pavel},
   author={Nenashev, Gleb},
   author={Postnikov, Alexander},
   title={Trianguloids and triangulations of root polytopes},
   eprint={arXiv:1803.06239}
   date={2018}
}
\bib{grunbaum2003}{book}{
   author={Gr\"unbaum, Branko},
   title={Convex Polytopes},
   series={Graduate Text in Mathematics},
   volume={221},
   edition={2},
   publisher={Springer-Verlag New York},
   date={2003} 
}
\bib{hetyei2009}{article}{
   author={Hetyei, G\'abor},
   title={Delannoy Orthants of Legendre Polytopes},
   journal={Disc.\ Comput.\ Geom.},
   volume={42},
   date={2009},
   pages={705--721}
}
\bib{hrs2000}{article}{
   author={Huber, Birkett},
   author={Rambau, J\"org},
   author={Santos, Francisco},
   title={The Cayley trick, lifting subdivisions and the Bohne-Dress theorem on zonotopal tilings},
   journal={J.\ Eur.\ Math.\ Soc,},
   volume={2},
   date={2000},
   pages={179--198}
}
\bib{meszaros2011}{article}{
   author={M\'esz\'aros, Karola},
   title={Root polytopes, triangulations, and the subdivision algebra. I},
   journal={Trans.\ Amer.\ Math.\ Soc.},
   volume={363},
   date={2011},
   number={8},
   pages={4359--4382}
}
\bib{meszaros2016}{article}{
   author={M\'esz\'aros, Karola},
   title={Pipe dream complexes and triangulations of root polytopes belong together},
   journal={SIAM J.\ Disc.\ Math.},
   volume={30},
   date={2016},
   number={1},
   pages={100--111}
}
\bib{portakal2019}{article}{
   author={Portakal, \.Irem},
   title={On the classification of rigid toric varieties arising from bipartite graphs},
   eprint={arXiv:1905.02445}
   date={2019}
}
\bib{postnikov2009}{article}{
   author={Postnikov, Alexander},
   title={Permutohedra, associahedra, and beyond},
   journal={Int.\ Math.\ Res.\ Not.},
   date={2009},
   number={6},
   pages={1026--1106}
}
\bib{santos2005}{article}{
   author={Santos, Francisco},
   title={The Cayley trick and triangulations of products of simplices},
   book={
      title={Integer points in polyhedra--geometry, number theory, algebra, optimization},
      volume={374}
      publisher={Amer.\ Math.\ Soc},
      address={Providence, RI},
   }
   pages={151--177},
   date={2005}
}
\bib{vv2006}{article}{
   author={Valencia, Carlos E.},
   author={Villarreal, Rafael H.},
   title={Explicit representations by halfspaces of the edge cone of a graph},
   journal={Int.\ J.\ Contemp.\ Math.\ Sci.},
   date={2006},
   number={1},
   pages={53--66}
}
\bib{ziegler2007}{book}{
   author={Ziegler, G\"unter M.},
   title={Lectures on polytopes},
   series={Graduate Texts in Mathematics},
   volume={152},
   edition={7},
   publisher={Springer, New York},
   date={2007} 
}
\end{biblist}
\end{bibdiv}
\end{document}